\theoremstyle{plain}  
      \newtheorem{theorem}{Theorem}[section]
      \newtheorem{lemma}[theorem]{Lemma}
      \newtheorem{corollary}[theorem]{Corollary}
      \newtheorem{proposition}[theorem]{Proposition}
      \theoremstyle{definition}
       \newtheorem{example}{Example}[section] 
      \newtheorem{remark}[theorem]{Remark}
\begin{document}
\author{N. Ghroda}

  \address{Department of Mathematics\\University
  of York\\Heslington\\York YO10 5DD\\UK}
\email{ng521@york.ac.uk}

    \title[Bicyclic semigroups of left I-quotients]{Bicyclic semigroups of left I-quotients}

   \begin{abstract}In this article we study left I-orders in the bicyclic monoid $\mathcal{B}$. We give necessary and sufficient conditions for a subsemigroup of $\mathcal{B}$ to be a left I-oreder in $\mathcal{B}$. We then prove that any left I-order in  $\mathcal{B}$  is straight.
   \end{abstract}
   
   \keywords{bicyclic semigroup, bisimple inverse $\omega$-semigroup, I-quotients, I-order}

   \date{\today}

   \maketitle
 
\section{Introduction}
 The first published description of the bicyclic semigroup was given by Evgenii Lyapin in 1953 \cite{Lyp}. A description of the subsemigroups of the bicyclic monoid was given in 2005 \cite{ruskuc}. In this article, we use this description to study  left I-orders in the bicyclic monoid. \par
\bigskip
Many definitions of semigroups of quotients have been proposed and studied. The first, that was specifically tailored to the structure of semigroups was introduced  by Fountain and Petrich in \cite{pjhon}, but was restricted to completely 0-simple semigroups of left quotients. This definition has been extended to the class of all semigroups \cite{bisGould}. The idea is that a subsemigroup  $S$  of a semigroup $Q$  is a \emph{left order} in  $Q$   or  $Q$  is a \emph{semigroup of left quotients} of $S$ if every element of  $Q$  can be written as   $a^{\sharp}b$  where  $a , b \in S$  and  $a^{\sharp}$  is the inverse of   $a$  in a subgroup of  $Q$ and if, in addition, every \emph{square-cancellable} element  (an element  $a$ of a semigroup  $S$ is square-cancellable if  $a\, \mathcal{H}^{*}\,a^{2}$) lies in a subgroup of  $Q$. \emph{Semigroups of right quotients} and \emph{right orders} are defined dually. If  $S$ is both a left order and a right order in a semigroup  $Q$, then  $S$  is an \emph{order} in  $Q$ and $Q$ is a semigroup of \emph{quotients} of $S$. This definition and its dual were used in \cite{bisGould} to characterize semigroups which have  bisimple inverse $\omega$-semigroups of left quotients.\par
\bigskip
On the other hand, Clifford \cite{clifford} showed that from any right cancellative monoid  $S$ with (LC) we can construct a bisimple inverse  monoid $Q$ such that  $Q=S^{-1}S$; that is, every element  $q$  in $Q$ can be written as  $a^{-1}b$   where  $a ,b \in S$ and $a^{-1}$ is the inverse of $a$ in $Q$ in the sense of inverse semigroup theory. By saying that a semigroup  $S$ has the (LC) \emph{condition} we mean that for any  $a,b\in S$ there is an element  $c\in S$ such that  $Sa\cap Sb=Sc$.  The author and Gould in \cite{GG} have  extended Clifford's work to a left ample semigroup with (LC) where they  introduced the following definition of left I-orders in inverse semigroups:\par
 \bigskip
 Let  $Q$ be an inverse semigroup. A  subsemigroup  $S$  of  $Q$  is a \emph{left I-order} in  $Q$ or $Q$ is a semigroup of \emph{left I-quotients} of $S$, if every element in  $Q$  can be written as  $a^{-1}b$    where  $a ,b \in S$. The notions of \emph{right I-order} and  \emph{semigroup of right I-quotients} are defined dually. If  $S$ is both a left I-order and a right I-order in  $Q$, we say that  $S$  is an \emph{I-order} in  $Q$ and $Q$ is a semigroup of \emph{I-quotients} of $S$. It is clear that, if $S$ a left order in an inverse semigroup $Q$, then it is certainly a left I-order in $Q$; however, the converse is not true   (see for example \cite{GG} Example 2.2). \par
\bigskip 
A left I-order  in an inverse semigroup $Q$ is  \emph{straight left I-order} if every element in  $Q$ can be written as  $a^{-1}b$ where  $a,b \in S$ and  $a\,\mathcal{R}\,b$ in  $Q$; we also say that $Q$ is a \emph{straight left I-quotients} of $S$. If  $S$ is  straight in $Q$,
we have the advantage of controlling the product in $Q$.  \par
\bigskip
In \cite{NG} the author has given the necessary and sufficient conditions for a semigroup $S$ to have a  bisimple inverse $\omega$-semigroup left I-quotients, modulo left I-order in the bicyclic semigroup $\mathcal{B}$, which is the most straightforward example of the bisimple inverse $\omega$-semigroup. In fact, it is a semigroups with many remarkable properties. Left I-orders in the bicyclic semigroup are interesting in their own right.  By describtions left I-order in $\mathcal{B}$, we obtain:
\begin{theorem}\label{main} Let $S$ be a subsemigroup of $\mathcal{B}$. If $S$ is a left I-order in $\mathcal{B}$, then it is straight. \end{theorem}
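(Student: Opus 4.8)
The plan is to reduce the whole statement to a single test element and thereby sidestep a case analysis over the subsemigroups of $\mathcal{B}$. I realise $\mathcal{B}$ as $\{(m,n):m,n\ge 0\}$ with $(m,n)(p,q)=(m+p-\min(n,p),\,n+q-\min(n,p))$, inversion $(m,n)^{-1}=(n,m)$, and use that $(m,n)\,\mathcal{R}\,(p,q)$ iff $m=p$, in which case $(m,x)^{-1}(m,y)=(x,y)$. So proving $S$ straight is the same as proving: for every $(x,y)\in\mathcal{B}$ there is an $m$ with $(m,x),(m,y)\in S$.

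First I would analyse how $(0,1)$ can be written as $a^{-1}b$ with $a=(m,n),\,b=(p,q)\in S$. From $a^{-1}b=(n+p-\min(m,p),\,m+q-\min(m,p))=(0,1)$, the first coordinate (which is $\ge n$) forces $n=0$ and $p\le m$, and then the second coordinate equals $(m-p)+q=1$, leaving just two options: $a=(p+1,0),\,b=(p,0)$, or $a=(m,0),\,b=(m,1)$. In the first option $S$ contains $(p+1,0)$ and $(p,0)$, whose row indices are coprime. In the second, put $m_1=m$: if $m_1=0$ then $(0,0),(0,1)\in S$; if $m_1\ge 1$ then $(m_1,1)\cdot(m_1,0)^{2}=(3m_1-1,0)\in S$, so $S$ contains $(m_1,0)$ and $(3m_1-1,0)$, again with coprime row indices. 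Hence any left I-order $S$ satisfies: \emph{either $(0,0),(0,1)\in S$, or $S$ contains $(a,0)$ and $(b,0)$ with $\gcd(a,b)=1$}.

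If $(0,0),(0,1)\in S$ the conclusion is immediate: $(0,1)^{k}=(0,k)$, so the whole $\mathcal{R}$-class $R_0=\{(0,k):k\ge 0\}$ lies in $S$, and $(x,y)=(0,x)^{-1}(0,y)$ is a straight expression for every $(x,y)$. In the coprime case I would first note that $S$ meets every $\mathcal{L}$-class: writing an idempotent $(x,x)$ as $c^{-1}d$ and matching coordinates forces $S$ to contain an element $(m_x,x)$ with second coordinate $x$. Then $(a,0)^{i}(b,0)^{j}(m_x,x)=(ia+jb+m_x,\,x)\in S$ for all $i,j\ge 0$ not both zero, and since $\{ia+jb:i,j\ge 0\}$ omits only finitely many non-negative integers, $S$ meets the $\mathcal{L}$-class of $x$ in every sufficiently large $\mathcal{R}$-class. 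Given $(x,y)$, pick an $m$ large enough for both $x$ and $y$; then $(m,x),(m,y)\in S$ and $(x,y)=(m,x)^{-1}(m,y)$ is a straight expression, which finishes the proof.

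I do not expect a genuine obstacle: the one real idea is to test the element $(0,1)$ (any non-idempotent of the $\mathcal{R}$-class $R_0$ would do), after which everything is a short coordinate computation; the only places to be slightly careful are the small values $m_1\in\{0,1\}$ in the second step and the degenerate coprime pair $a=0$ (which forces $b=1$) in the last step. I would also remark that the dichotomy isolated at the end of the second step, together with ``$S$ meets every $\mathcal{L}$-class'', is exactly the characterisation of left I-orders in $\mathcal{B}$ proved earlier in the paper, so in effect the two results are obtained by the same computation.
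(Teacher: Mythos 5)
Your proof is correct, and it takes a genuinely different route from the paper's. The paper proves Theorem~\ref{main} by invoking the Descal\c{c}o--Ru\v{s}kuc classification of subsemigroups of $\mathcal{B}$ (Proposition~\ref{subbicyclic}) and running a separate analysis for upper, lower and each of the two forms of two-sided subsemigroup, in each case first characterising exactly which subsemigroups of that type are left I-orders and then reading off straightness from an explicit decomposition. You bypass the classification entirely: testing the single element $b=a^0b^1$ against a decomposition $(a^ib^j)^{-1}(a^mb^n)$ yields the dichotomy that either $1,b\in S$ (whence $R_1\subseteq S$ and straightness is Example~\ref{exRclass} and Remark~\ref{rems}), or $S$ contains two elements $a^{\alpha}b^0,a^{\beta}b^0$ with $\gcd(\alpha,\beta)=1$; combined with the fact that $S$ meets every $\mathcal{L}$-class (the paper's Lemma~\ref{identity}, which you reprove), the numerical-semigroup observation that $\{i\alpha+j\beta: i,j\geq 0\}$ has finite complement in $\mathbb{N}^0$ puts, for each $x$, an element $a^Nb^x$ in $S$ for every sufficiently large $N$, so any two columns of $\mathcal{B}$ meet $S$ in a common row and every element of $\mathcal{B}$ has a straight expression. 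I checked the coordinate computations, including $(a^{m}b)(a^{m})^2=a^{3m-1}$ and the degenerate cases $m=0$ and $\{\alpha,\beta\}=\{0,1\}$, and they are right; the diagonal subsemigroups of Proposition~\ref{subbicyclic}(1) are vacuous for both arguments since they generate only idempotents. What the paper's longer route buys is the finer, type-by-type description of which subsemigroups are left I-orders, which is its real content; what your route buys is a short, self-contained proof of Theorem~\ref{main} that needs no structure theory of subsemigroups of $\mathcal{B}$, together with the clean intrinsic criterion that $S$ is a left I-order in $\mathcal{B}$ if and only if it meets every $\mathcal{L}$-class and satisfies your dichotomy.
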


In the preliminaries after introducing the necessary notation, we give some previous results giving the description of subsemigroups of $\mathcal{B}$.\par
\bigskip
We use the classification of  subsemigroups of $\mathcal{B}$ in \cite{ruskuc} to investigate which of them are left I-orders in  $\mathcal{B}$. Subsemigroups of $\mathcal{B}$ fall into three classes upper, lower and two-sided. In Sections 3, 4 and 5 we give the necessary and  sufficient conditions for  upper, lower and two-sided subsemigroups of $\mathcal{B}$ to be left I-orders in $\mathcal{B}$, respectively. In each case, such left I-orders are straight and this proves Theorem ~\ref{main}.

\newpage

\section{Preliminaries}\label{prelim}
Throughout this article we shall follow the terminology and notation of \cite{clifford}. The symbol $\mathbb{N}$ will denote the set consisting of the natural numbers and $\mathbb{N}^0=\mathbb{N}\cup \{0\}$. Let $ \mathcal{R} ,  \mathcal{L} ,  \mathcal{H}$ and  $\mathcal{D}=  \mathcal{R} \circ \mathcal{L}=\mathcal{L} \circ \mathcal{R}$\ be the usual Green's relations. A semigroup $S$ is called \emph{simple} if $S$ does not contain proper two-sided ideals and \emph{bisimple} if it consists of a single  $\mathcal{D}$-class. \par
 \medskip 
The bicyclic semigroup  $\mathcal{B}(a,b)$ is defined by the monoid generated by two elements $a$ and  $b$ subject only to the condition that $ba=1$. It follows that the elements can all be written in the standard form $a^ib^j$ where $i,j \geq 0$. We can write out the elements of  $\mathcal{B}$ in array. 
\begin{center}\[\begin{array}{c|ccccc}
1& b & b^{2} & b^{3} & b^{4} & \ldots\\ \hline
a & ab & ab^{2} & ab^{3} & ab^{4} & \ldots \\ 
a^{2} & a^{2}b & a^{2}b^{2} & a^{2}b^{3}  & a^{2}b^{4} & \ldots\\
a^{3}& a^{3}b &a^{3}b^{2} & a^{3}b^{3} & a^{3}b^{4} & \ldots \\
a^{4} & a^{4}b& a^{4}b^{2} & a^{4}b^{3} & a^{4}b^{4}  & \ldots \\
\vdots & \vdots & \vdots & \vdots & \vdots & \ddots\end{array}\]\end{center}
The  multiplication on  $\mathcal{B}$ is defined as follows:
\[a^{k}b^{l}a^{m}b^{n} = \begin{cases}
a^{k+m-l}b^{n} & l\leq m, \\
a^{k}b^{l-m+n} & l> m.  
\end{cases}\] 
We can put the two cases together as follows: \[a^kb^la^mb^n =a^{k-l+t}b^{n-m+t}\ \mbox{where}\ t=\mbox{max}\{l,m\}.\]
The monoid $\mathcal{B}$ is thus isomorphic to the monoid $\mathbb{N}^0 \times \mathbb{N}^0$ with multiplication 
\[(k,l)(m,n)=(k-l+t,n-m+t)\ \mbox{where}\ t=\mbox{max}\{l,m\}.\] 
It is easy to see that $\mathcal{B}$ is an inverse semigroup: the element $a^ib^j$ has inverse $a^jb^i$. The idempotents of  $\mathcal{B}$ are of the form \[e_n=a^nb^n \;  (n=0,1,2,...) \; \mbox{which satisfy} \; 1=e_0\geq e_1 \geq e_2 \geq ....\]

Green's relations $\mathcal{L} , \mathcal{R}$  and  $\mathcal{H}$  are given by
\[a^{i}b^{j}\,\mathcal{L}\,a^{k}b^{l} \; \mbox{if and only if}\; j = l,\] 
\[a^{i}b^{j}\,\mathcal{R}\, a^{k}b^{l} \; \mbox{if and only if} \; i = k,\]
and
\[a^{i}b^{j}\,\mathcal{H}\,a^{k}b^{l} \; \mbox{if and only if}\; i = k\; \mbox{and}\; j=l.\]  

In the array, the rows are the $\mathcal{R}$-classes of $\mathcal{B}$, the columns are the $\mathcal{L}$-classes and
the $\mathcal{H}$-classes are points. There is only one $\mathcal{D}$-class; that is, $\mathcal{B}$ is a bisimple monoid (hence simple).\par
\bigskip
     
Following \cite{ruskuc}, we start by introducing some basic subsets of  $\mathcal{B}$,
\[\begin{array}{rcl}D &= &\{a^{i}b^{i} : i \geq 0\} \:............................................ \mbox{the \emph{diagonal}}.\\
L^{p} &=& \{a^{i}b^{j} : 0 \leq j \leq p, i \geq 0\}\; \mbox{for} \; p \geq 0\: ............. \mbox{the \emph{left strip} (determined by $p$)}. \end{array}\]  For \ $0 \leq q \leq p$ we define the \emph{triangle}
\[T_{q,p} = \{a^{i}b^{j} : q \leq i \leq j < p\}.\]
 For  $i,m \geq 0$ and  $d > 0$ we define the rows
\[\Lambda_{i} = \{a^ib^j : j \geq 0\},\ \Lambda_{i,m,d} = \{a^{i}b^{j} : d|j- i, j \geq m\}\]
and in general for  $I\subseteq \{0, \ldots ,m -1\}$,
 \[\Lambda_{I,m,d} =\bigcup_{i \in I}\Lambda_{i,m,d}=\{a^{i}b^{j}:i\in I, d|j-i , j\geq m\}.\]
For  $p \geq 0, d > 0, r \in [d] = \{0,\ldots, d-1\}$ and  $P \subseteq[d]$ we define the \emph{squares}
\[\Sigma_{p} = \{a^{i}b^{j}:i,j\geq p\}, \ \Sigma_{p,d,r} = \{a^{p+r+ud}b^{p+r+vd} : u, v\geq 0\}, \]
\[\Sigma_{p,d,P}=\bigcup_{r\in P}\Sigma_{p,d,r}=\{a^{p+r+ud}b^{p+r+vd} : r \in P, u, v \geq 0\}.\]
It is worth pointing out that in \cite{ruskuc} it was shown that a subsemigroup of $\mathcal{B}$ is inverse if and only if it has the form $F_D \cup \Sigma_{p,d,P}$ where $F_D$ is a finite subset of the diagonal (which may be empty).
The function  $\widehat{} :\mathcal{B}\longrightarrow \mathcal{B}$ defined by  $a^{i}b^{j} \rightarrow \widehat{a^{i}b^{j}}= a^{j}b^{i}$ is an anti-isomorphism. Geometrically it is the reflection with respect to the main diagonal.
\begin{figure}[htp]
\centering 
\includegraphics[height=2.2in]{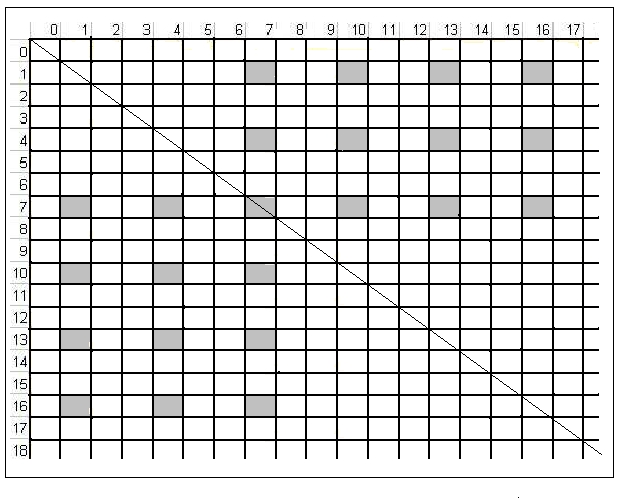}
\includegraphics[height=2.2in]{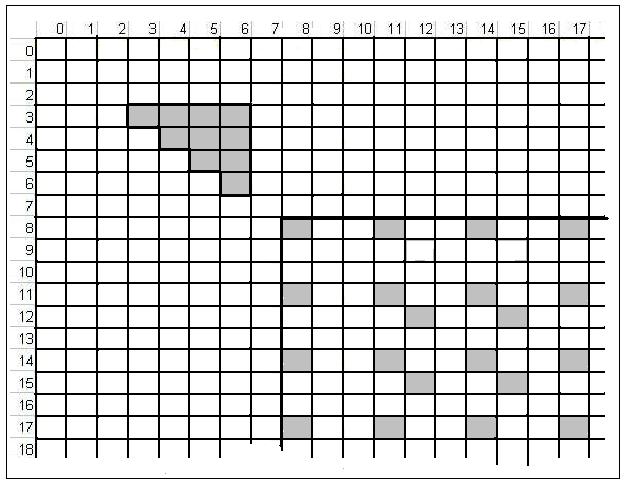}
\caption{The subsets $\Lambda_{\{1,4,7\},7,3}, \widehat{\Lambda}_{\{1,4,6\},7,3},\Sigma_{8,3,\{0,4\}}$ and $T_{3,7}$}
\end{figure} 
\begin{proposition}\label{subbicyclic}\cite{ruskuc}
Let $S$ be a subsemigroup of the bicyclic monoid. Then one of the following conditions holds:\\
1. $S$ is a subset of the diagonal,  $S \subseteq D$.\\
2. $S$ is a union of a subset of a triangle, a subset of the diagonal above the triangle, a square below the triangle and some lines belonging to a strip determined by the square and the triangle, or the reflection of this union with respect to the diagonal. Formally there exist  $q, p \in \mathbb{N}^0$ with  $q \leq p, d \in \mathbb{N},   I \subseteq \{q, . . . , p-1\}$ with  $q \in I, P \subseteq \{0, . . . , d - 1\}$ with  $0 \in P, \ F_{D} \subseteq D \cap L_{q}, \ F \subseteq T_{q,p}$  such that  $S$ is of one of the following forms:
\begin{center}
(i) \ $S = F_{D} \cup F \cup \Lambda_{I,p,d}\cup \Sigma_{p,d,P}$;\\
(ii)\ $S = F_{D} \cup \widehat{F} \cup \widehat{\Lambda}_{I,p,d}\cup \Sigma_{p,d,P}$.\\
\end{center}
3. There exist  $d \in \mathbb{N},\ I \subseteq \mathbb{N}^0,\ F_{D} \subseteq D \cap L_{min(I)}$ and sets  $S_{i} \subseteq \Lambda_{i,i,d}\ (i \in I)$ such that  $S$ is of one of the following forms:
\begin{center}
\; (i)\ $S = F_{D}\cup\bigcup_{i\in I} S_{i}$; \\ 
(ii) $S = F_{D}\cup\bigcup_{i\in I} \widehat{S_{i}}$;
\end{center}
where each  $S_{i}$ has the form 
\[S_{i} = F_{i} \cup \Lambda_{i,m_i,d}\]
for some  $m_{i} \in  \mathbb{N}^0$ and some finite set  $F_{i}$, and
\[I = I_{0} \cup \{r + ud : r \in R, u \in  \mathbb{N}^0, r + ud \geq N\}\]
for some (possibly empty)  $R \subseteq \{0, . . . , d - 1\}$, some  $N \in \mathbb{N}^0$ and some finite set
\ $I_{0} \subseteq \{0, . . . ,N - 1\}.$\\
\\
We call diagonal subsemigroups those defined by 1., two-sided subsemigroups those defined by 2., upper subsemigroups  those defined by 3.(i) and lower subsemigroups those defined by 3.(ii). \end{proposition}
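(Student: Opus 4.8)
The plan is to work throughout in the coordinate picture $a^ib^j\leftrightarrow(i,j)\in\mathbb{N}^0\times\mathbb{N}^0$ and to classify $S$ according to how it meets the three regions of $\mathcal{B}$: the diagonal $D=\{(i,i)\}$, the \emph{strictly upper} part $\{(i,j):i<j\}$ and the \emph{strictly lower} part $\{(i,j):i>j\}$. The first step is to record the basic arithmetic of the multiplication, which is the engine of everything. For $i\le j$ one computes $(i,j)^n=(i,\,i+n(j-i))$, so a single upper element generates an arithmetic progression of common difference $j-i$ inside its own row $\Lambda_i$; dually a lower element generates a progression down its column. Moreover, for two elements of the same row one finds $(i,j)(i,j')=(i,\,j+(j'-i))$, so, writing each element by its \emph{height} $h=j-i$ above the diagonal, the set of heights occurring in a fixed row is a \emph{subsemigroup of} $(\mathbb{N}^0,+)$. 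By the elementary structure theory of such subsemigroups, each nonempty row $\Lambda_i\cap S$ equals a finite set together with an eventual arithmetic tail, that is $S_i=F_i\cup\Lambda_{i,m_i,d}$.

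The main trichotomy then runs as follows. If $S\subseteq D$ we are in case~1 and there is nothing more to do. The crucial split is whether $S$ meets both the strictly upper and the strictly lower regions. If it meets both, pick an upper element $u=(i,j)$ (so $i<j$) and a lower element $v=(k,l)$ (so $k>l$); passing to the powers $u^a=(i,\,i+a(j-i))$ and $v^b=(l+b(k-l),\,l)$ and forming $v^bu^a$, a short case analysis on the relevant $\max$ produces the elements $(i+b(k-l),\,i+a(j-i))$ with $a,b$ ranging independently over all large values. This is exactly a two--dimensional grid, i.e.\ a square $\Sigma_{p,d,P}$ for suitable parameters. I would then argue that whatever of $S$ lies strictly above this square is confined to finitely many full rows $\Lambda_{I,p,d}$, that only finitely many points survive inside the triangle $T_{q,p}$, and that the diagonal contribution is a finite set $F_D\subseteq D\cap L_q$; assembling these gives form 2.(i), the lower-dominated situation giving the reflection 2.(ii). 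If instead $S$ avoids the strictly lower region (the case it avoids the upper one being its mirror image), then $S$ lies in the closed upper half $\{i\le j\}$, the per-row analysis of the first paragraph applies verbatim, and we are heading for case~3.(i).

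Two points remain, and these are where the real work lies. The first is to show that a \emph{single} common difference $d$ governs every row, and the square, simultaneously. For this I would exploit the cross-row products: for $(i,j),(i',j')\in S$ with $i\le j\le i'$ one finds $(i,j)(i',j')=(i+i'-j,\,j')$, whereas for $j\ge i'$ one stays in row $i$ with heights adding; tracking these relations forces all the row-differences to be commensurable, identifies their overall $\gcd$ as the uniform $d$, and pins down the residue sets $P$ and $I$ modulo $d$. The second, and I expect the genuinely delicate, point is the \emph{eventual periodicity} of the index set $I$ of occupied rows: the displayed product shows that the set of occupied rows is closed under certain fixed downward shifts determined by the generators, so $I$ must be the union of a finite initial set $I_0\subseteq\{0,\dots,N-1\}$ with a periodic tail $\{r+ud:r\in R,\ r+ud\ge N\}$. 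The main bookkeeping obstacle is to make the threshold $N$, the finite sets $F$, $F_D$, $F_i$, and the modulus $d$ all mutually consistent. Finally, applying the reflection anti-isomorphism $\widehat{\phantom{a}}$ converts the upper description into the lower one, yielding 3.(ii) and completing the classification.
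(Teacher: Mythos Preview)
The paper does not prove this proposition at all: it is quoted verbatim from Descal\c{c}o and R\v{u}skuc \cite{ruskuc}, as indicated by the citation attached to the proposition heading, and no proof environment follows it in the text. The classification is used as a black box throughout the rest of the article. Consequently there is no proof in the paper against which your attempt can be compared.

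That said, your sketch is a reasonable outline of the strategy one would expect in the original source: the trichotomy according to whether $S$ meets only the diagonal, only one side of it, or both; the numerical-semigroup structure of the heights in a fixed row; the manufacture of a full grid $\Sigma_{p,d,P}$ from one upper and one lower element via powers; and the identification of a single governing modulus $d$. You are right that the genuinely delicate bookkeeping lies in proving that one $d$ works uniformly and that the index set $I$ of occupied rows is eventually periodic with the displayed description; your sketch flags these but does not carry them out, so as written it is an outline rather than a proof. If you intend to include a self-contained argument, those two points are where the detailed work must go; otherwise, matching the paper, a citation to \cite{ruskuc} suffices.
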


We begin with the following example which plays  a significant role in studying left I-orders in   
 $\mathcal{B}$.

\begin{example}\label{exRclass} 
 Let $R_1=\{a^0b^j: j\geq 0\}$ be the $\mathcal{R}$-class of the identity element 1 of $\mathcal{B}$ and $q=a^mb^n\in \mathcal{B}$. Then \[q=a^mb^n=(a^0b^m)^{-1}(a^0b^n),\] so that  $R_1$ is a straight left I-order in $\mathcal{B}$. In fact, it is a special case of Clifford's result,  mentioned in the Itroduction. 
\end{example}
\begin{remark}\label{rems}
Any subsemigroup of $\mathcal{B}$ that contains $R_1$ is a straight left I-order in $\mathcal{B}$.
\end{remark}

\begin{lemma}\label{identity}
Let $S$ be a left I-order in  $\mathcal{B}$. Then for any $\mathcal{L}$-class $L_k$ of $\mathcal{B}$, $S\cap L_k\neq \emptyset$. \end{lemma}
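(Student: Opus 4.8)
\emph{Proof proposal.} The plan is to test the defining property of a left I-order on one carefully chosen element of $L_k$ and then read the conclusion straight off the multiplication rule of $\mathcal{B}$.

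First I would fix $k\in\mathbb{N}^0$ and work with the idempotent $e_k=a^kb^k$, noting that $e_k\in L_k$ since an $\mathcal{L}$-class of $\mathcal{B}$ is determined by the exponent of $b$. Because $S$ is a left I-order in $\mathcal{B}$, we may write $e_k=u^{-1}v$ with $u=a^{i_1}b^{j_1}$ and $v=a^{i_2}b^{j_2}$ in $S$. Passing to the model $\mathbb{N}^0\times\mathbb{N}^0$, recall that the inverse of $(i,j)$ is $(j,i)$ and that $(p,l)(m,n)=(p-l+t,n-m+t)$ with $t=\max\{l,m\}$, so that $u^{-1}v=(j_1,i_1)(i_2,j_2)=(j_1-i_1+t,\,j_2-i_2+t)$ with $t=\max\{i_1,i_2\}$.

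Next I would split into the two cases of this product. If $i_1\geq i_2$, then $t=i_1$ and the equation $u^{-1}v=e_k=(k,k)$ forces $j_1=k$, so $u=a^{i_1}b^k\in S\cap L_k$. If $i_1<i_2$, then $t=i_2$ and $u^{-1}v=(k,k)$ forces $j_2=k$, so $v=a^{i_2}b^k\in S\cap L_k$. In both cases $S\cap L_k\neq\emptyset$, which is the assertion.

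The point to be careful about — and the reason for choosing the idempotent $e_k$ rather than an arbitrary member of $L_k$ — is that $e_k$ has equal first and second coordinates: in the first branch the $b$-exponent of the element of $S$ that gets forced into $L_k$ is read off the \emph{first} coordinate of the product, while in the second branch it is read off the \emph{second} coordinate, so only when these two coordinates coincide (i.e.\ for the diagonal element $e_k$) do both branches land in $L_k$. Beyond this choice the argument is routine, so I do not foresee a genuine obstacle; the only work is the two-case bookkeeping with the multiplication rule of $\mathcal{B}$.
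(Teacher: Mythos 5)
Your proof is correct and is essentially the paper's own argument: write $a^kb^k=(a^ib^j)^{-1}(a^mb^n)=a^{j-i+t}b^{n-m+t}$ with $t=\max\{i,m\}$ and observe that $t=i$ forces $k=j$ while $t=m$ forces $k=n$, so in either case $S$ meets $L_k$. Your two-case bookkeeping is just a slightly more explicit rendering of the paper's line ``$k=j-i+t=n-m+t$, so either $k=j$ or $k=n$.''
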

\begin{proof}
Let $k\in \mathbb{N}^0$. Then  

\[\begin{array}{rcl}a^kb^k&=&(a^ib^j)^{-1}(a^mb^n)\\ &=&a^jb^ia^mb^n\\ &=&
a^{j-i+t}b^{n-m+t}\end{array}\]
where $t=$max$\{i,m\}$, for some $a^ib^j,a^mb^n\in S$. Hence $k=j-i+t=n-m+t$, so that either $k=j$ or $k=n$. Thus $S\cap L_k\neq \emptyset$.\end{proof}

We conclude this section by the following lemma which plays a significant role in the next sections.

\begin{lemma}\label{dimpact} Let $S$ be a left I-order in $\mathcal{B}$ and let $d\in \mathbb{N}$. If for all $a^ib^j \in S$ we have $d|i-j$, then $d=1$.\end{lemma}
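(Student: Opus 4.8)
The plan is to test the left I-order property against a single, well-chosen element of $\mathcal{B}$, namely a generator, and read off that $d$ must divide $1$. The key observation is that the quantity $i-j$ attached to an element $a^ib^j$ transforms in a controlled way modulo $d$ when one forms a left I-quotient. Concretely, for $x=a^ib^j$ and $y=a^kb^l$ in $\mathcal{B}$, the inverse of $x$ is $a^jb^i$, and repeating the computation from the proof of Lemma~\ref{identity} gives
\[
x^{-1}y=a^jb^ia^kb^l=a^{\,j-i+t}\,b^{\,l-k+t},\qquad t=\max\{i,k\}.
\]
Hence the difference of the two exponents of $x^{-1}y$ is $(j-i+t)-(l-k+t)=(j-i)-(l-k)=-(i-j)+(k-l)$. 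In particular, if $d\mid i-j$ and $d\mid k-l$, then $d$ also divides the exponent-difference of $x^{-1}y$.

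With this in hand the argument is short. Since $S$ is a left I-order in $\mathcal{B}$, the generator $a=a^1b^0$ can be written as $a=x^{-1}y$ for some $x=a^ib^j$ and $y=a^kb^l$ in $S$. By hypothesis $d\mid i-j$ and $d\mid k-l$, so by the observation above $d$ divides the exponent-difference of $a^1b^0$, which is $1-0=1$. Therefore $d=1$. One could equally well run the same computation with $b=a^0b^1$ in place of $a$.

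Structurally this says that $\Delta_d=\{a^ib^j\in\mathcal{B}:d\mid i-j\}$ is closed under inverses (as $d\mid i-j$ iff $d\mid j-i$) and, by the displayed computation, under multiplication, so it is an inverse subsemigroup of $\mathcal{B}$ containing $S$; being closed under inverses and products it then contains every left I-quotient of elements of $S$, hence all of $\mathcal{B}$, which is absurd unless $d=1$. In either formulation I do not expect a genuine obstacle: the only thing requiring care is carrying out the max-formula computation for the product in $\mathcal{B}$ — exactly as in Lemma~\ref{identity} — and keeping the signs straight in $-(i-j)+(k-l)$.
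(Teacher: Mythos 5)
Your proof is correct and is essentially the paper's own argument: the paper performs the same product computation for an arbitrary $a^kb^l\in\mathcal{B}$, obtains $k-l\equiv 0\pmod d$, and concludes $d=1$ (implicitly by specialising to an element with $k-l=1$, which you make explicit by choosing the generator $a^1b^0$). Your closing remark about $\Delta_d$ being an inverse subsemigroup containing $S$ is a nice repackaging but not a different proof.
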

\begin{proof}
 Let  $a^kb^l\in \mathcal{B}$. Then there exist $a^ib^j,a^mb^n \in S$ with
 
\[\begin{array}{rcl}  a^kb^l&=&(a^ib^j)^{-1}(a^mb^n)\\ &=&a^jb^ia^mb^n\\ &=& a^{j-i+t}b^{n-m+t} \end{array}\]
where $t=$max$\{i,m\}$. Now \[\begin{array}{rcl} k-l&=&(j-i+t)-(n-m+t)\\ &=&(j-i)+(m-n)\equiv 0 (\mbox{mod} \ d).\end{array}\] It follows that $d=1$.\end{proof}

\section{Upper subsemigroups}\label{leftiupper}

In this section we give  necessary and  sufficient conditions for an upper subsemigroup $S$ of $\mathcal{B}$ to be a  left I-order in $\mathcal{B}$. The upper subsemigroups of $\mathcal{B}$ are those having all elements above the diagonal; that is, all elements satisfy:  $a^{i}b^{j}, j \geq i$. Throughout this section $S$ is an upper subsemigroup of $\mathcal{B}$ having the form  (3).($i$) in Proposition~\ref{subbicyclic}. We have already met one of them, which is the $\mathcal{R}$-class of the identity. By Lemma ~\ref{identity}, we deduce that any left I-order upper subsemigroup is a monoid. \par
\bigskip 

The next example is of a subsemigroup bigger than $\mathcal{R}$-class of the identity. In fact, it is the largest upper subsemigroup of $\mathcal{B}$.
\begin{example}
The upper subsemigroup  $\mathcal{B}^+=\{a^{i}b^{j}: j \geq i\}$ of $\mathcal{B}$ is a straight left I-order in  $\mathcal{B}$, by Remark~\ref{rems} as $R_1\subseteq \mathcal{B}^+$. In fact, we can write any element $a^ib^j$ of $\mathcal{B}$ as follows \[  a^ib^j=(a^ib^{i+j})(a^jb^{i+j})^{-1}\] where $a^ib^{i+j},  a^jb^{i+j} \in \mathcal{B}^{+}$, that is, $\mathcal{B}^{+}$ is a right I-order in $\mathcal{B}$. Hence $\mathcal{B}^+$ is an I-order in $\mathcal{B}$. It is worth pointing out that $\mathcal{B}^{+}$   is a full subsemigroup of $\mathcal{B}$ in the sense that $E(\mathcal{B})=E(\mathcal{B}^+)$. 
\end{example}

\begin{remark}
Let $S$ be an upper subsemigroup of $\mathcal{B}$. If $i\notin I$, then $S$ does not contain any element of form $a^ib^j$ for all $j> i$, and only contains $a^ib^i$ if $a^ib^i\in F_D$.
\end{remark}  

\begin{lemma}\label{zeroinI}
Let $S$ be an upper subsemigroup of $\mathcal{B}$. If $S$ is a left I-order in $\mathcal{B}$, then $d=1$ and $0\in I$.\end{lemma}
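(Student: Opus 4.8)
The plan is to combine Lemma~\ref{dimpact} with a direct analysis of the structure of an upper subsemigroup. First I would deal with $d=1$. Since $S$ is upper of the form (3).(i), every element $a^ib^j$ of $S$ lies either in $F_D$ (so $i=j$, giving $d\mid i-j$ trivially) or in some $S_i=F_i\cup\Lambda_{i,m_i,d}$; the set $F_i$ together with $\Lambda_{i,m_i,d}$ sits inside $\Lambda_{i,i,d}=\{a^ib^j: d\mid j-i\}$, so again $d\mid i-j$. Hence for all $a^ib^j\in S$ we have $d\mid i-j$, and Lemma~\ref{dimpact} immediately forces $d=1$.

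Next I would show $0\in I$. Assume for contradiction that $0\notin I$. The index set $I$ of an upper subsemigroup has a minimum element, say $k=\min(I)\ge 1$ (it cannot be empty, since $S$ is a left I-order hence nonempty, and the preceding remark shows $F_D\subseteq D\cap L_{\min(I)}$). The key geometric observation is that every element of $S$ then has the form $a^ib^j$ with $i\ge k$: indeed elements of $\bigcup_{i\in I}S_i$ have first coordinate in $I\subseteq\{k,k+1,\ldots\}$, and elements of $F_D$ lie in $D\cap L_k=\{a^lb^l : l\le k\}$ — wait, here I need to be careful about the direction of the inequality, so let me instead use the cleaner invariant: I claim every element $a^ib^j\in S$ satisfies $i\le j$ (upper) and, more importantly, that $S$ cannot reach the $\mathcal{L}$-class $L_0$ in the right way. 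Concretely, by Lemma~\ref{identity}, $S\cap L_0\neq\emptyset$, so $S$ contains some $a^ib^0$, forcing $i=0$ (as $S$ is upper, $j\ge i$ with $j=0$ gives $i=0$), hence $1=a^0b^0\in S$ and $0\in I$ unless $1\in F_D$; but if $1=a^0b^0\in F_D$ then $F_D\subseteq D\cap L_{\min(I)}$ gives $\min(I)\ge 0$, which is automatic and not yet a contradiction.

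So the genuine argument must be sharper: I would take an arbitrary element $a^0b^n\in\mathcal{B}$ with $n\ge 1$ (sitting in $R_1$) and write it as $(a^ib^j)^{-1}(a^mb^n')=a^jb^ia^mb^{n'}=a^{j-i+t}b^{n'-m+t}$ with $t=\max\{i,m\}$ for suitable elements of $S$. Matching first coordinates: $j-i+t=0$ forces $t=i$ (so $i\ge m$) and $j=0$, hence $i=0$ too (upper forces $j\ge i$, so $i\le j=0$). Thus $a^0b^0\in S$, and since an element of $F_i$ or $\Lambda_{i,m_i,d}$ with $i=0$ requires $0\in I$, the only escape is $a^0b^0\in F_D$. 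To close this off I would then use that we need $a^0b^n\in\mathcal{B}$ for \emph{every} $n\ge 1$, and the second coordinate gives $n=n'-m+i=n'-m$ with $a^mb^{n'}\in S$, $m\le i=0$, so $m=0$ and $a^0b^{n'}\in S$ with $n'=n$; hence $a^0b^n\in S$ for every $n$, i.e. $R_1\subseteq S$, which forces $0\in I$ directly (an element $a^0b^n\in S$ with $n>0$ cannot lie in $F_D$). This contradicts $0\notin I$, completing the proof.

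The main obstacle is the bookkeeping in the second part: one must rule out the possibility that $S$ meets the relevant $\mathcal{L}$- and $\mathcal{R}$-classes only through the finite diagonal piece $F_D$, and the clean way to do that is to observe that representing \emph{all} of $R_1=\{a^0b^n : n\ge 0\}$ as left I-quotients forces $S$ itself to contain all of $R_1$, after which $0\in I$ is immediate. Everything else is routine coordinate arithmetic with the bicyclic multiplication rule $a^kb^la^mb^n=a^{k-l+t}b^{n-m+t}$, $t=\max\{l,m\}$.
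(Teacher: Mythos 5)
Your argument is correct and, once the false starts are stripped away, is essentially the paper's own proof: establish $d\mid i-j$ for all of $S$ and invoke Lemma~\ref{dimpact} to get $d=1$, then expand $a^0b^h=(a^ib^j)^{-1}(a^mb^n)$ and match first coordinates to force $i=j=m=0$, so that $a^0b^h\in S$ and hence $0\in I$. The only stylistic difference is that you frame the second half as a contradiction and explicitly note that $a^0b^h$ with $h\geq 1$ cannot lie in $F_D$, a small step the paper leaves implicit.
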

\begin{proof}
Since $S$ an upper subsemigroup, it follows that for all $a^ib^j \in S$ we have that $d|j-i$ for some $d\in \mathbb{N}$. By Lemma~\ref{dimpact}, it is clear that $d=1$. It is therefore remains to show that $0\in I$. By Lemma~\ref{identity}, we have that $1\in S$. Let $a^0b^h \in \mathcal{B} $ for some $h\in \mathbb{N}$. Hence 
\[\begin{array}{rcl}
a^0b^h&=&(a^ib^j)^{-1}(a^mb^n)\\ &=&a^{j-i+t}b^{n-m+t} \end{array}\]
where $t=$max$\{i,m\}$, for some $a^ib^j,a^mb^n\in S$. For $0=j-i+t$ we must have that $t=i$ and $j=0$. As $i\leq j$, it follows that $i=j=0$ and so $a^0b^h=a^mb^n\in S$.\end{proof}

The following corollary is obvious.
\begin{corollary}\label{RinI}
Let $S$ be an upper subsemigroup of $\mathcal{B}$. If $S$ is a left I-order in $\mathcal{B}$, then $R_1 \subseteq S$.\end{corollary}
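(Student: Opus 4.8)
The plan is to observe that the computation already carried out in the proof of Lemma~\ref{zeroinI} in fact delivers slightly more than the membership $0\in I$: it shows that \emph{every} element of $R_1$ lies in $S$. So the corollary follows by re-reading that argument one step further, together with Lemma~\ref{identity}.

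First I would note that, since $S$ is an upper subsemigroup, the only element of the $\mathcal{L}$-class $L_0$ that can possibly lie in $S$ is $a^0b^0=1$; as Lemma~\ref{identity} guarantees $S\cap L_0\neq\emptyset$, we get $1=a^0b^0\in S\cap R_1$. Next, fix $h\in\mathbb{N}$. Because $S$ is a left I-order in $\mathcal{B}$, write $a^0b^h=(a^ib^j)^{-1}(a^mb^n)=a^{j-i+t}b^{n-m+t}$ with $t=\max\{i,m\}$ and $a^ib^j,a^mb^n\in S$. Comparing first coordinates, $0=j-i+t$; since $t\geq i$ this forces $t=i$ and $j=0$, and then the upper condition $i\leq j$ gives $i=j=0$. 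Hence $(a^ib^j)^{-1}=1$ and $a^0b^h=a^mb^n\in S$. Combining this with the observation that $1\in S$, we conclude $a^0b^h\in S$ for every $h\geq 0$, that is, $R_1\subseteq S$.

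The one point worth flagging --- and the reason the corollary is not literally a one-line consequence of the \emph{statement} of Lemma~\ref{zeroinI} --- is that knowing merely that $d=1$ and $0\in I$ only guarantees the tail $\Lambda_{0,m_0,1}=\{a^0b^j: j\geq m_0\}$ of $R_1$ lies in $S$, and a priori $m_0$ could be positive; the small powers $a^0b^j$ with $j<m_0$ have to be recovered by the displayed identity above (which is exactly the step performed inside the proof of Lemma~\ref{zeroinI}). Apart from that observation there is no genuine obstacle: the proof is a short direct verification, which is why the author calls it obvious and offers no proof.
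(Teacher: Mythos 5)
Your proof is correct and is essentially the paper's intended argument: the computation inside the proof of Lemma~\ref{zeroinI} already shows $a^0b^h=a^mb^n\in S$ for every $h\geq 1$, and Lemma~\ref{identity} together with the upper condition forces $1\in S$. Your remark that the bare statement of Lemma~\ref{zeroinI} (i.e.\ $d=1$ and $0\in I$) would only yield the tail $\{a^0b^j:j\geq m_0\}$ is a fair and accurate observation about why one must cite the lemma's proof rather than its conclusion.
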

 
By Lemma~\ref{identity},  $S\cap L_1\neq \emptyset$. As $F_D=D\cap L_{min(I)}$, the following corollary is clear.
\begin{corollary}\label{fd1}Let $S$ be an upper subsemigroup of $\mathcal{B}$. If $S$ is a left I-order in $\mathcal{B}$, then $F_D=\{1\}$ or $F_D= \emptyset$.\end{corollary}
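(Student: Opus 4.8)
The plan is to derive the corollary directly from Lemma~\ref{zeroinI} together with the prescribed shape of $F_D$ in Proposition~\ref{subbicyclic}(3)(i). First, since $S$ is an upper subsemigroup which is a left I-order in $\mathcal{B}$, Lemma~\ref{zeroinI} applies, giving $d=1$ and, more to the point, $0\in I$. As $I\subseteq\mathbb{N}^0$, this forces $\min(I)=0$.

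Next I would invoke the structural constraint $F_D\subseteq D\cap L_{\min(I)}$ coming from Proposition~\ref{subbicyclic}(3)(i). With $\min(I)=0$ this reads $F_D\subseteq D\cap L_0$, and the set $D\cap L_0$ contains no diagonal element other than the identity $1=a^0b^0$. Hence $F_D\subseteq\{1\}$, i.e.\ $F_D=\{1\}$ or $F_D=\emptyset$, which is exactly the assertion. For completeness one can observe, in line with Lemma~\ref{identity} (which yields $1\in S$, since $S$ is upper and meets every $\mathcal{L}$-class, in particular $L_0$), that the identity is genuinely available to lie in $F_D$, so that both alternatives really do occur among left I-order upper subsemigroups.

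There is no real obstacle here: the corollary is essentially a one-line consequence of Lemma~\ref{zeroinI}. The only point that needs a moment's care is the bookkeeping around Proposition~\ref{subbicyclic}(3)(i), namely verifying that the admissible region $D\cap L_{\min(I)}$ for $F_D$ collapses to (at most) the singleton $\{1\}$ once $\min(I)$ has been pinned down to $0$; everything else is immediate.
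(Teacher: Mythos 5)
Your proof is correct and follows essentially the same route as the paper: combine the structural constraint $F_D\subseteq D\cap L_{\min(I)}$ from Proposition~\ref{subbicyclic}(3)(i) with the fact that a left I-order forces $\min(I)=0$, so that the admissible region collapses to $\{1\}$. If anything, your appeal to Lemma~\ref{zeroinI} (which gives $0\in I$ outright) is the cleaner justification compared with the paper's citation of Lemma~\ref{identity}, but the substance is identical.
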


We now come to the main result of this section which is the first result in this article.

\begin{proposition}\label{uppercondition}
For an upper subsemigroup $S$ of $\mathcal{B}$,  the following are equivalent: 

$(i)$  $S$ is a left I-order in $\mathcal{B}$; 

$(ii)$  $R_1 \subseteq S$.

Moreover, writing $S$ as $S=F_{D}\cup\bigcup_{i\in I} S_{i}$, we have $R_1 \subseteq S$ if and only if $0\in I, \ d=1$ and $F_D\cup F_0=\{1,...,a^0b^{m_0-1}\}$.\end{proposition}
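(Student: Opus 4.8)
The equivalence $(i)\Leftrightarrow(ii)$ requires no new work: $(ii)\Rightarrow(i)$ is exactly Remark~\ref{rems}, and $(i)\Rightarrow(ii)$ is exactly Corollary~\ref{RinI}. So the substance of the proposition is the ``moreover'' clause, which I would establish by inspecting the decomposition
\[S=F_D\cup\bigcup_{i\in I}S_i,\qquad S_i=F_i\cup\Lambda_{i,m_i,d},\qquad F_D\subseteq D\cap L_{\min(I)}\]
supplied by Proposition~\ref{subbicyclic}(3)(i), and working out precisely which elements of $R_1=\{a^0b^j:j\geq 0\}$ can occur in each piece.

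For the forward direction, suppose $R_1\subseteq S$. First I would locate the off-diagonal element $a^0b\in R_1$: it is not on the diagonal, hence not in $F_D$, so it lies in some $S_i\subseteq\Lambda_{i,i,d}$, which forces $i=0$. Thus $0\in I$, so $\min(I)=0$ and $F_D\subseteq D\cap L_0=\{1\}$. Next I would rule out $d>1$ by exploiting finiteness of $F_0$: if $d>1$ there are infinitely many $j\geq m_0$ with $d\nmid j$, and for each such $j$ the element $a^0b^j\in R_1$ lies neither in $\Lambda_{0,m_0,d}$, nor in $F_D$, nor in any $S_i$ with $i\neq 0$, so it would be forced into the finite set $F_0$ --- impossible; hence $d=1$. (Since $(i)\Leftrightarrow(ii)$ is already proved, one could instead just quote Lemma~\ref{zeroinI} here.)

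With $d=1$ we have $\Lambda_{0,m_0,1}=\{a^0b^j:j\geq m_0\}\subseteq S$, so the only members of $R_1$ not already supplied by this ray are $1,a^0b,\dots,a^0b^{m_0-1}$. None of these can lie in $S_i$ for $i\neq 0$, so each lies in $F_D\cup F_0$, giving $\{1,\dots,a^0b^{m_0-1}\}\subseteq F_D\cup F_0$; the opposite inclusion is routine from the shape of the decomposition ($F_D\subseteq\{1\}$ and $F_0$ consists of elements $a^0b^j$ with $j<m_0$), so $F_D\cup F_0=\{1,\dots,a^0b^{m_0-1}\}$. Conversely, if $0\in I$, $d=1$ and $F_D\cup F_0=\{1,\dots,a^0b^{m_0-1}\}$, then
\[S\supseteq(F_D\cup F_0)\cup\Lambda_{0,m_0,1}=\{1,\dots,a^0b^{m_0-1}\}\cup\{a^0b^j:j\geq m_0\}=R_1,\]
which closes the loop.

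The one place calling for care is the bookkeeping in the forward direction: checking that the ``low'' elements $a^0b^j$ with $j<m_0$ genuinely have nowhere to live except $F_D\cup F_0$, and that the normalization of the decomposition keeps $F_D$ and $F_0$ from extending past $a^0b^{m_0-1}$. Everything genuinely non-formal --- that $R_1\subseteq S$ forces $0\in I$ and $d=1$ --- is short, and has essentially already been isolated in Lemmas~\ref{identity}, \ref{dimpact} and~\ref{zeroinI}.
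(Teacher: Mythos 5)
Your proposal is correct and follows the same route as the paper: the equivalence of $(i)$ and $(ii)$ is obtained by combining Example~\ref{exRclass} (via Remark~\ref{rems}) with Corollary~\ref{RinI}, and the ``moreover'' clause by inspecting the normal form of $S$ from Proposition~\ref{subbicyclic}(3)(i). The paper simply states that the latter ``follows from inspection,'' whereas you carry out that inspection explicitly; your bookkeeping (locating $a^0b$ to force $0\in I$, using finiteness of $F_0$ to force $d=1$, and accounting for the low elements $1,\dots,a^0b^{m_0-1}$) is sound.
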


\begin{proof}
The equivalence of $(i)$ and $(ii)$ follows from Example~\ref{exRclass} and Corollary~\ref{RinI}. The remaining statement follows from inspection of the description of $S$ as in $3(i)$ of Proposition~\ref{subbicyclic}.\end{proof}

\begin{corollary}\label{sublowbicyclic}  
Let  $S$ be an upper subsemigroup of $\mathcal{B}$. If  $S$ is a left I-order in $\mathcal{B}$, then  it is straight.\end{corollary}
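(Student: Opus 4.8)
The plan is to reduce the statement immediately to what has already been proved. By Corollary~\ref{RinI}, the hypothesis that the upper subsemigroup $S$ is a left I-order in $\mathcal{B}$ forces $R_1 \subseteq S$, where $R_1 = \{a^0b^j : j \geq 0\}$ is the $\mathcal{R}$-class of the identity. So the whole content of the corollary is that containing $R_1$ already guarantees straightness, which is exactly Remark~\ref{rems}. I would therefore first invoke Corollary~\ref{RinI} to get $R_1 \subseteq S$, and then either cite Remark~\ref{rems} directly or, for completeness, spell out the one-line argument behind it.

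The spelled-out argument runs as follows. Let $q = a^mb^n$ be an arbitrary element of $\mathcal{B}$. Exactly as in Example~\ref{exRclass}, write
\[
q = a^mb^n = (a^0b^m)^{-1}(a^0b^n).
\]
Both factors $a^0b^m$ and $a^0b^n$ lie in $R_1$, hence in $S$, and since $\mathcal{R}$ on $\mathcal{B}$ is governed by equality of the $a$-exponent, we have $a^0b^m\,\mathcal{R}\,a^0b^n$ in $\mathcal{B}$. Thus every element of $\mathcal{B}$ admits a left I-quotient expression $u^{-1}v$ with $u,v\in S$ and $u\,\mathcal{R}\,v$, which is precisely the definition of $S$ being a straight left I-order. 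Combined with Corollary~\ref{RinI}, this proves the corollary.

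I do not expect any real obstacle here: the only thing to notice is that the representation from Example~\ref{exRclass} is automatically an $\mathcal{R}$-related one, and this is transparent from the description of Green's relations on $\mathcal{B}$ recorded in Section~\ref{prelim}. If anything, the ``hard part'' is purely organizational — making sure we are allowed to quote Corollary~\ref{RinI} (equivalently Proposition~\ref{uppercondition}) at this point, which we are, since it precedes the corollary in the text.
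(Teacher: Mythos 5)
Your proof is correct and follows exactly the route the paper intends: Corollary~\ref{RinI} gives $R_1\subseteq S$, and the representation $a^mb^n=(a^0b^m)^{-1}(a^0b^n)$ from Example~\ref{exRclass}/Remark~\ref{rems} is already an $\mathcal{R}$-related one since both factors have $a$-exponent $0$. Nothing is missing.
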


\section{Lower subsemigroups}\label{leftilower}

In this section we give  necessary and  sufficient conditions for the lower subsemigroups of $\mathcal{B}$ to be  left I-orders in $\mathcal{B}$. Throughout this section $S$ is a lower subsemigroup of $\mathcal{B}$ having the form  (3).($ii$) in Proposition~\ref{subbicyclic}. We begin with:

\begin{example}The lower subsemigroup $T=\{a^i b^j : i\geq j ,  i \geq m\}$ of $\mathcal{B}$ is a straight left I-order in  $\mathcal{B}$. Since for any element $q=a^{k}b^{h}$ in  $\mathcal{B}$ we have \[q=a^{k}b^{h}=a^{k}b^{k+h+m}a^{k+h+m}b^{h}=(a^{k+h+m}b^{k})^{-1}(a^{k+h+m}b^{h})\] and it is clear that  $a^{k+h+m}b^{k}$ and $a^{k+h+m}b^{h}$ are in  $T$. \end{example}

\begin{remark}\label{R1}
Let $S$ be a lower subsemigroup of $\mathcal{B}$. If $j\notin I$, then $S$ contains no element $a^ib^j$ with $i>j$.\end{remark}

\begin{lemma}\label{zerolowecase}
Let $S$ be a lower subsemigroup of $\mathcal{B}$. If $S$ is a left I-order in $\mathcal{B}$, then $d=1$ and  $0\in I$.
\end{lemma}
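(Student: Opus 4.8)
The plan is to mirror the argument of Lemma~\ref{zeroinI} for the lower case, using the reflection anti-isomorphism $\widehat{\ }$ to transfer intuition but carrying out the computation directly in $S$. The statement $d=1$ is essentially immediate: since $S$ is a lower subsemigroup of form $3(ii)$, every element $a^ib^j\in S$ satisfies $i\geq j$ and $d\mid i-j$, so Lemma~\ref{dimpact} forces $d=1$. The substantive part is $0\in I$.

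To show $0\in I$, I would first invoke Lemma~\ref{identity} to get $1=a^0b^0\in S$; in particular $S\cap L_0\neq\emptyset$. The goal is to exhibit an element of the form $a^hb^0$ with $h>0$ inside $S$, since by Remark~\ref{R1} this would immediately yield $0\in I$. To produce such an element, pick any $a^hb^0\in\mathcal{B}$ with $h\in\mathbb{N}$ and write it as a left I-quotient: there exist $a^ib^j,a^mb^n\in S$ with
\[a^hb^0=(a^ib^j)^{-1}(a^mb^n)=a^jb^ia^mb^n=a^{j-i+t}b^{n-m+t},\]
where $t=\max\{i,m\}$. Comparing the $b$-exponents, $0=n-m+t$, which forces $t=m$ and $n=0$; hence $m\geq i$, $t=m$, and the element $a^mb^n=a^mb^0\in S$. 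Since $S$ is a lower subsemigroup we also have $m\geq n=0$ automatically, and $a^hb^0=a^{j-i+m}b^0$. Now I distinguish whether $h=0$ or $h>0$: taking $h\geq 1$ in the above gives $j-i+m=h\geq 1$, so in particular $a^mb^0\in S$ with $m\geq 1$ unless $j-i$ absorbs all of $h$; in any event the element $a^mb^0$ lies in $S\cap L_0$ and is on the diagonal only if $m=0$. If $m\geq 1$, then $a^mb^0\in S$ with $m>0$ is an element with $i$-coordinate exceeding its $j$-coordinate, so by Remark~\ref{R1} we get $0\in I$. If instead every such representation forces $m=0$, then $a^0b^0=a^mb^n$ and $h=j-i$ with $i\geq j$ (as $a^ib^j\in S$), forcing $h\leq 0$, contradicting $h\geq 1$. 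Hence $m\geq 1$ is unavoidable and $0\in I$.

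The main obstacle I anticipate is bookkeeping with the two coordinates to be sure the produced element $a^mb^0$ genuinely lies below the diagonal (i.e. has positive $i$-coordinate) rather than collapsing onto the diagonal element $1$; the case split on whether $m=0$, combined with the lower-subsemigroup constraint $i\geq j$ on $a^ib^j$, is what closes this. One should also double-check that the appeal to Remark~\ref{R1} is legitimate, namely that $a^mb^0\in S$ with $m>0=j$ indeed forces $j=0\in I$ by that remark's contrapositive. Once $0\in I$ is established, combined with $d=1$ from the first paragraph, the lemma is proved, and this sets up the analogue of Corollary~\ref{RinI} and Proposition~\ref{uppercondition} for the lower case in the remainder of the section.
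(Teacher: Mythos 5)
Your proof is correct and follows essentially the same route as the paper: represent $a^hb^0$ ($h\geq 1$) as $(a^ib^j)^{-1}(a^mb^n)$, deduce $n=0$ and $t=m$, and use the lower-subsemigroup constraint $i\geq j$ to get $m=h+(i-j)\geq h\geq 1$, whence $a^mb^0\in S$ and $0\in I$ — the paper simply states this inequality directly where you run a (logically equivalent) case split on $m=0$. One small aside: Lemma~\ref{identity} only yields $S\cap L_0\neq\emptyset$, not $1\in S$ (that implication works for upper, not lower, subsemigroups), but since you never use that claim the argument is unaffected.
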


\begin{proof}
Since $S$ a lower subsemigroup, it follows that for all $a^ib^j \in S$ we have that $d|j-i$ for some $d\in \mathbb{N}$. By Lemma~\ref{dimpact}, it is clear that $d=1$.  Let $a^hb^0\in \mathcal{B}$ where $h\in \mathbb{N}$. Then \[a^hb^0=(a^ib^j)^{-1}(a^mb^n)=a^{j-i+t}b^{n-m+t}\]  where $t=\max\{m,i\}$, so that $0=n-m+t$. Hence we deduce that $n=0$ and $t=m$. We also have that $h=j-i+m$ so that $m=h+(i-j)\geq h$ so that $a^mb^0\in S$. Hence $0\in I$.  \end{proof}

Since $F_D\subseteq D\cap L_{min(I)}$, the following corollary is clear.
\begin{corollary}\label{flo}Let $S$ be a lower subsemigroup of $\mathcal{B}$. If $S$ is a left I-order in $\mathcal{B}$, then $F_D=\{1\}$ or $F_D=\emptyset$.\end{corollary}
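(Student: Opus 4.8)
The plan is to combine the structural constraint $F_D \subseteq D \cap L_{\min(I)}$ recorded in part $3(ii)$ of Proposition~\ref{subbicyclic} with the conclusion of Lemma~\ref{zerolowecase}. First I would invoke Lemma~\ref{zerolowecase}: since $S$ is a left I-order and $S$ is a lower subsemigroup, we obtain $0 \in I$; in particular $I \neq \emptyset$, so $\min(I)$ is well defined and $\min(I) = 0$. Hence $L_{\min(I)} = L_0$, the $\mathcal{L}$-class $\{a^i b^0 : i \geq 0\}$.

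Next I would identify the intersection $D \cap L_0$. An element $a^i b^i$ of the diagonal $D$ lies in $L_0$ exactly when its second exponent vanishes, i.e.\ $i = 0$, so $D \cap L_0 = \{a^0 b^0\} = \{1\}$. Substituting this into the constraint $F_D \subseteq D \cap L_{\min(I)}$ yields $F_D \subseteq \{1\}$, and the only subsets of a one-element set are that set itself and $\emptyset$, which is precisely the assertion $F_D = \{1\}$ or $F_D = \emptyset$.

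There is essentially no difficult step here: the statement is immediate bookkeeping once Lemma~\ref{zerolowecase} is available together with the description of lower subsemigroups. The only points deserving a moment's care are checking that $\min(I)$ makes sense before writing $L_{\min(I)}$, which is guaranteed by $0 \in I$, and correctly reading off $D \cap L_0 = \{1\}$ from the array picture of $\mathcal{B}$; after that the conclusion drops out with no computation.
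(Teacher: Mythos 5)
Your proof is correct and follows exactly the route the paper intends: Lemma~\ref{zerolowecase} gives $0\in I$, so $\min(I)=0$ and $F_D\subseteq D\cap L_{\min(I)}=\{1\}$, whence $F_D=\{1\}$ or $F_D=\emptyset$. (The only cosmetic point is that in Proposition~\ref{subbicyclic} the symbol $L_{\min(I)}$ denotes the left strip $\{a^ib^j:0\le j\le \min(I)\}$ rather than an $\mathcal{L}$-class, but with $\min(I)=0$ both readings give $D\cap L_0=\{1\}$, so nothing changes.)
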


Suppose that a lower subsemigroup $S$ is a left I-order in $\mathcal{B}$. From Lemma \ref{zerolowecase}, we have that 
$d=1$ and  $0\in I$. We claim that $I=\mathbb{N}^0$. By Corollary ~\ref{flo}, $F_D=\{1\}$ or $F_D=\emptyset$, so that as $S$ intersects every $\mathcal{L}$-class of $\mathcal{B}$, by Lemma~\ref{identity}, we have that $I=\mathbb{N}^0$. We have one half of the following proposition.
\begin{proposition}\label{loweriorder}
A lower subsemigroup $S$ is a left I-order in $\mathcal{B}$ if and only if $d=1$ and $I=\mathbb{N}^0$.\end{proposition}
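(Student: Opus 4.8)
One implication is already in hand: the paragraph preceding the statement shows that if $S$ is a left I-order then $d=1$ and $0\in I$ (Lemma~\ref{zerolowecase}), and then, since $F_D\subseteq D\cap L_{\min(I)}=\{1\}$ (Corollary~\ref{flo}) while $S$ must meet every $\mathcal{L}$-class of $\mathcal{B}$ (Lemma~\ref{identity}), every column of the array must be present, i.e. $I=\mathbb{N}^0$. So the only thing left to prove is the converse, and I would do it by exhibiting an explicit \emph{straight} factorisation, exactly as in the example at the start of this section.

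First I would unpack the hypotheses. Write $S=F_D\cup\bigcup_{i\in\mathbb{N}^0}\widehat{S_i}$ with $S_i=F_i\cup\Lambda_{i,m_i,1}$ as in case 3(ii) of Proposition~\ref{subbicyclic}. Because $d=1$, the divisibility condition $d\mid j-i$ is vacuous, so $\Lambda_{i,m_i,1}$ is just $\{a^ib^j: j\geq n_i\}$, where $n_i:=\max\{m_i,i\}$ (the constraint $j\geq i$ coming from $S_i\subseteq\Lambda_{i,i,1}$); reflecting, $\widehat{S_i}\supseteq\{a^jb^i: j\geq n_i\}$. The upshot of this step is the key fact that for every $i\in\mathbb{N}^0$ and every $p\geq n_i$ we have $a^pb^i\in S$; in particular the finite exceptional sets $F_D$ and $F_i$ are irrelevant.

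The second step is the factorisation itself. Given an arbitrary $q=a^kb^h\in\mathcal{B}$, pick any $p$ with $p\geq n_k$ and $p\geq n_h$. Then $a^pb^k,a^pb^h\in S$ by the previous step, they are $\mathcal{R}$-related in $\mathcal{B}$ since they share the first coordinate $p$, and the multiplication rule gives
\[(a^pb^k)^{-1}(a^pb^h)=a^kb^p\,a^pb^h=a^kb^h=q.\]
Hence every element of $\mathcal{B}$ is of the form $a^{-1}b$ with $a,b\in S$ and $a\,\mathcal{R}\,b$, so $S$ is a (straight) left I-order in $\mathcal{B}$; this also yields the lower analogue of Corollary~\ref{sublowbicyclic}.

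I do not expect any serious obstacle here; the one place to be careful is the bookkeeping in the first step — confirming, from the fairly intricate description in Proposition~\ref{subbicyclic}, that once $d=1$ and $I=\mathbb{N}^0$ each column below the diagonal genuinely contains a full tail $\{a^jb^i: j\geq n_i\}$, so that a single exponent $p$ can be chosen large enough to serve in both columns $L_k$ and $L_h$ at once.
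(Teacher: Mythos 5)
Your proposal is correct and follows essentially the same route as the paper: the forward direction is exactly the paper's pre-proposition argument via Lemma~\ref{zerolowecase}, Corollary~\ref{flo} and Lemma~\ref{identity}, and the converse is the same explicit straight factorisation $a^kb^h=(a^pb^k)^{-1}(a^pb^h)$, with your $p\geq\max\{n_k,n_h\}$ playing the role of the paper's exponent $h+k+\max\{m_h,m_k\}$. Your extra care with $n_i=\max\{m_i,i\}$ is a harmless (and slightly tidier) version of the same bookkeeping.
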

\begin{proof}
Suppose that $d=1$ and $I=\mathbb{N}^0$. Then \[\widehat{\Lambda}_{i,m_{i},1}=\{a^jb^i:j=t+i, j\geq m_i\}=\{a^{t+i}b^i:t+i\geq m_i\}.\]  For any $a^hb^k \in \mathcal{B}$ we have \[a^hb^k=(a^{h+k+t}b^h)^{-1}(a^{h+k+t}b^k)\] where $t=$max$\{m_h,m_k\}$ for $i\in \mathbb{N}^0$. It is clear that $a^{h+k+t}b^h,a^{h+k+t}b^k\in S$.
\end{proof}
The following corollary is clear from the proof of Proposition~\ref{loweriorder}
\begin{corollary}\label{stralow}
Let $S$ be a lower subsemigroup of $\mathcal{B}$. If $S$ is a left I-order in $\mathcal{B}$, then it is straight.\end{corollary}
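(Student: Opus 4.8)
\textbf{Proof proposal for Corollary~\ref{stralow}.}

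The plan is to simply read off straightness from the explicit factorisation already produced in the proof of Proposition~\ref{loweriorder}. By hypothesis $S$ is a lower subsemigroup which is a left I-order in $\mathcal{B}$, so Proposition~\ref{loweriorder} gives $d=1$ and $I=\mathbb{N}^0$. Hence for every $q=a^hb^k\in\mathcal{B}$ we have the expression
\[
q=a^hb^k=(a^{h+k+t}b^h)^{-1}(a^{h+k+t}b^k),
\]
where $t=\max\{m_h,m_k\}$, and both $a^{h+k+t}b^h$ and $a^{h+k+t}b^k$ lie in $S$. The only thing left to observe is that these two elements are $\mathcal{R}$-related in $\mathcal{B}$.

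The key step is the $\mathcal{R}$-check, and it is immediate from the description of Green's relations on $\mathcal{B}$ recalled in Section~\ref{prelim}: $a^ub^v\,\mathcal{R}\,a^{u'}b^{v'}$ if and only if $u=u'$. Here both elements have first exponent $h+k+t$, so $a^{h+k+t}b^h\,\mathcal{R}\,a^{h+k+t}b^k$ in $\mathcal{B}$. Therefore every element of $\mathcal{B}$ is written as $a^{-1}b$ with $a,b\in S$ and $a\,\mathcal{R}\,b$, which is exactly the definition of $S$ being a straight left I-order in $\mathcal{B}$.

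There is essentially no obstacle here: the content was all in Proposition~\ref{loweriorder}, whose proof was arranged so that the witnessing pair shares an $\mathcal{R}$-class by construction. The corollary is just making explicit that the factorisation exhibited there is a \emph{straight} one. One could alternatively phrase this as: the converse direction of Proposition~\ref{loweriorder} already establishes that $S$ is a straight left I-order, and the forward direction shows these are the only lower subsemigroups that are left I-orders at all, so the two together yield the corollary.
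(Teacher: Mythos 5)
Your proof is correct and is exactly what the paper intends when it says the corollary is ``clear from the proof of Proposition~\ref{loweriorder}'': the witnessing pair $a^{h+k+t}b^h$, $a^{h+k+t}b^k$ produced there shares the first exponent $h+k+t$, hence is $\mathcal{R}$-related in $\mathcal{B}$, which is precisely straightness.
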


\section{Two-sided subsemigroups}\label{leftitwo-sided}

In this section we give  necessary and  sufficient conditions for the two-sided subsemigroups of $\mathcal{B}$ to be left I-orders in $\mathcal{B}$. {The two-sided subsemigroups of $\mathcal{B}$ have the forms  (2).($i$) and (2).($ii$) in Proposition~\ref{subbicyclic}. Throughout this section we shall assume that a two-sided subsemigroup $S$ of $\mathcal{B}$ is proper, in the sense  $S\neq \mathcal{B}$.}\par
\bigskip  
We divide this section into two parts. We study the first form in the first part, and the second form in the second part. \par
\medskip
We begin with the two-sided subsemigroups which have the form (2).($i$) in Proposition~\ref{subbicyclic}.\par
\medskip
  Let $a^mb^n \in F \subseteq S=F_D \cup F  \cup \Lambda_{I,p,d}\cup \Sigma_{p,d,P}$. Then, $d|(n-m)$. For, as $0\in P$, $a^pb^{p+d}\in S$ and we have that  $a^pb^{p+d}a^mb^n=a^pb^{n-m+p+d} \in \Sigma_{p,d,P}$, so that $d|(m-n-d)$, that is, $m-n=(t+1)d$ for some $t\in \mathbb{N}^0$. Hence for any $a^ib^j \in S$ we have that $d|i-j$.  By Lemma ~\ref{dimpact}, the first part of the following lemma is clear.
\begin{lemma}\label{dequal1}
If a two-sided subsemigroup $S=F_{D}\cup F\cup \Lambda_{I,p,d} \cup \Sigma_{p,d,P}$ of $\mathcal{B}$ is a left I-order in  $\mathcal{B}$, then  $d=1$ and $q=0$. Consequently, $R_{1} \subseteq S$.\end{lemma}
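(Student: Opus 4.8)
The plan is to read off $d=1$ immediately from the paragraph preceding the lemma: there it is shown that $d\mid i-j$ for every $a^ib^j\in S$, so Lemma~\ref{dimpact} forces $d=1$. I would record at the outset that $p\ge 1$, since $q\in I\subseteq\{q,\dots,p-1\}$ is possible only when $q<p$; this small observation is used repeatedly, because it guarantees that every element of $\Lambda_{I,p,d}\cup\Sigma_{p,d,P}$ has $b$-exponent at least $p\ge 1$, hence none of them lies in the $\mathcal{L}$-class $L_0$.

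To establish $q=0$ I would apply Lemma~\ref{identity} to $L_0$: there is some $a^ib^0\in S$. Its $b$-exponent is $0<p$, so it lies neither in $\Lambda_{I,p,d}$ nor in $\Sigma_{p,d,P}$, whence $a^ib^0\in F_D\cup F$. If $a^ib^0\in F_D\subseteq D\cap L_q=\{a^qb^q\}$ then $a^ib^0=a^qb^q$, forcing $q=0$ and $i=0$. If $a^ib^0\in F\subseteq T_{q,p}$ then $q\le i\le 0$, again forcing $q=0$ and $i=0$. In either case $q=0$ and the element produced is $1=a^0b^0$, so in particular $1\in S$; note also that here $S$ has no element of the form $a^kb^0$ other than $1$.

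It then remains to deduce $R_1\subseteq S$, and for this I would use that $S$ is a left I-order rather than merely the shape of $S$. Fix $h\ge 1$ and write $a^0b^h=(a^ib^j)^{-1}(a^mb^n)=a^{j-i+t}b^{n-m+t}$ with $a^ib^j,a^mb^n\in S$ and $t=\max\{i,m\}$. From $0=j-i+t\ge j\ge 0$ we get $j=0$ and $t=i$, so $a^ib^0\in S$; by the previous paragraph (now that $q=0$) any such element is $1$, so $i=0$, and then $m\le t=i=0$ gives $m=0$ and $t=0$. The product collapses to $a^0b^n=a^mb^n$, so $a^0b^h=a^mb^n\in S$. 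Together with $1\in S$ this yields $R_1\subseteq S$.

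I expect the only point needing care to be the case analysis that locates an element with $b$-exponent $0$ inside the four-piece description $S=F_D\cup F\cup\Lambda_{I,p,d}\cup\Sigma_{p,d,P}$; everything turns on $p\ge 1$, which keeps $\Lambda_{I,p,d}$ and $\Sigma_{p,d,P}$ away from column $0$, so that such an element is trapped in $F_D\cup F$ and is forced to be the identity. Beyond this bookkeeping I anticipate no genuine difficulty.
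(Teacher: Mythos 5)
Your derivation of $d=1$ and your third paragraph (extracting $a^0b^h\in S$ from a decomposition $a^0b^h=(a^ib^j)^{-1}(a^mb^n)$) are correct and essentially coincide with the paper's own proof; your preliminary observation that $p\ge 1$ (forced by $q\in I\subseteq\{q,\dots,p-1\}$) is a genuine simplification, since it renders vacuous the case $a^ib^j\in\Sigma_{p,d,P}$ that the paper treats separately. The step that fails is in your second paragraph: you identify $D\cap L_q$ with $\{a^qb^q\}$. In the classification of Proposition~\ref{subbicyclic}, $L_q$ is the \emph{left strip} $\{a^ib^j:0\le j\le q,\ i\ge 0\}$ (written $L^p$ in Section~\ref{prelim}; $F_D$ is ``a subset of the diagonal above the triangle''), so $D\cap L_q=\{a^kb^k:0\le k\le q\}$, and $F_D$ may perfectly well contain $1=a^0b^0$ while $q>0$. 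Hence in the case $a^ib^0\in F_D$ you only learn $i=0$, i.e.\ that the element is the identity; you do \emph{not} learn $q=0$. Your proof of $q=0$ therefore has a hole precisely when the element of $S\cap L_0$ supplied by Lemma~\ref{identity} is $1$ sitting in $F_D$.

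The gap is repairable with material you already have, because your third paragraph never actually uses $q=0$ despite the parenthetical: the only fact it needs is that every $a^kb^0\in S$ equals $1$, and that follows from $p\ge 1$ alone (such an element is trapped in $F_D\cup F$, and membership in $D$ or in $T_{q,p}$ forces $k=0$ whatever $q$ is). So run that argument first to obtain $R_1\subseteq S$, and then deduce $q=0$ at the end: $a^0b^p\in S$ lies in none of $F_D$, $F\subseteq T_{q,p}$ (which requires $j<p$), or $\Sigma_{p,d,P}$ (which requires $i\ge p\ge 1$), hence $a^0b^p\in\Lambda_{I,p,d}$, so $0\in I$ and $q=\min I=0$. (The paper's proof likewise leaves this final inference from ``$a^0b^h\in S$ for all $h$'' to ``$q=0$'' implicit.)
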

\begin{proof}
Let $a^0b^h\in \mathcal{B}$ where $h\in \mathbb{N}$. Then, \[a^0b^h=(a^ib^j)^{-1}(a^mb^n)=a^{j-i+t}b^{n-m+t}\]  where $t=$max$\{m,i\}$, so that $0=j-i+t$. Hence we deduce that $j=0$. If $a^ib^j \in \Sigma_{p,d,P}$, then as $\Sigma_{p,d,P}$ is an inverse subsemigoup of $\mathcal{B}$ we have that $a^0b^h \in S$. In the case where $a^ib^j \notin \Sigma_{p,d,P}$ we must have that $a^ib^j\in F_{D}\cup F\cup \Lambda_{I,p,d}$. Hence $j\geq i$ so that $i=j=0$. It follows that $a^0b^h=a^mb^n\in S$. Hence $q=0$.  \end{proof}

Since $F_D\subseteq \{1\}$ we have that $F_D=\{1\}$ or $F_D=\emptyset$. In either case, $S\cap L_1=\{1\}$. Then the following corollaries are clear.
\begin{corollary}If a two-sided subsemigroup $S=F_{D}\cup F\cup \Lambda_{I,p,d} \cup \Sigma_{p,d,P}$ of $\mathcal{B}$ is a left I-order in  $\mathcal{B}$, then $F_D=\{1\}$ or $F_D=\emptyset$. \end{corollary}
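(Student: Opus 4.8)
The plan is to reduce the statement to the structural constraint on $F_D$ recorded in Proposition~\ref{subbicyclic}(2), combined with the already-established fact that $q=0$. The only substantive input is Lemma~\ref{dequal1}: since $S$ is assumed to be a left I-order of the first two-sided form $S=F_{D}\cup F\cup \Lambda_{I,p,d}\cup \Sigma_{p,d,P}$, that lemma forces $d=1$ and $q=0$, and this is available without further work.

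Next I would unwind the notation. By Proposition~\ref{subbicyclic}(2), $F_D$ is required to satisfy $F_D \subseteq D \cap L_q$, where $D=\{a^ib^i : i\geq 0\}$ is the diagonal and $L_q$ is the $\mathcal{L}$-class $\{a^ib^q : i\geq 0\}$. A diagonal element $a^ib^i$ lies in $L_q$ precisely when $i=q$, so $D \cap L_q=\{a^qb^q\}$ is a single idempotent; substituting $q=0$ gives $D \cap L_0=\{a^0b^0\}=\{1\}$, whence $F_D \subseteq \{1\}$. Since a subset of a one-element set is either that set or empty, we conclude $F_D=\{1\}$ or $F_D=\emptyset$, which is the assertion. (This is exactly the sentence preceding the corollary, so the corollary really is immediate once Lemma~\ref{dequal1} is in hand.)

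I do not anticipate any genuine obstacle: all the content is carried by Lemma~\ref{dequal1}, and the rest is bookkeeping. The one point to be careful about is not to conflate the subscript notation $L_q$ (the $\mathcal{L}$-class) with the left-strip notation $L^q$ from the preliminaries; with that distinction kept straight the computation of $D\cap L_q$ is trivial. As a sanity check one could also argue via Lemma~\ref{identity}, which gives $S\cap L_0\neq\emptyset$, and then observe that with $d=1$ and $q=0$ the pieces $F$, $\Lambda_{I,p,1}$, $\Sigma_{p,1,P}$ contribute to $L_0$ only through the element $1$ (or, if $p=0$, through non-diagonal elements of $\Sigma$ that are irrelevant to $F_D$), so again $F_D\subseteq\{1\}$; but the route through Lemma~\ref{dequal1} is cleaner and is the one I would present.
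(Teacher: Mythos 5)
Your argument is correct and is essentially the paper's own: Lemma~\ref{dequal1} gives $q=0$, and then the structural constraint $F_D\subseteq D\cap L_0=\{1\}$ from Proposition~\ref{subbicyclic}(2) forces $F_D=\{1\}$ or $F_D=\emptyset$. One small notational remark: the $L_q$ appearing in Proposition~\ref{subbicyclic} is the left strip (written $L^p$ in the preliminaries) from Descalco--Ruskuc, which is why $F_D$ is described as ``a subset of the diagonal above the triangle'', rather than the $\mathcal{L}$-class; but since $q=0$ the two readings of $D\cap L_0$ give the same singleton $\{1\}$, so your computation and conclusion are unaffected.
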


\begin{corollary}\label{twosided}
A two-sided subsemigroup $S=F_{D}\cup F\cup \Lambda_{I,p,d} \cup \Sigma_{p,d,P}$ of $\mathcal{B}$ is a left I-order in  $\mathcal{B}$ iff  $R_{1} \subseteq S$.\end{corollary}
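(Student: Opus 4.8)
\textbf{Proof proposal for Corollary~\ref{twosided}.}

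The plan is to prove the two implications separately, noting that the forward direction is essentially already done. If $S$ is a left I-order in $\mathcal{B}$, then Lemma~\ref{dequal1} gives $d=1$, $q=0$, and $R_1\subseteq S$ directly, so that half is immediate. The work is in the converse: assuming $R_1\subseteq S$, show $S$ is a left I-order in $\mathcal{B}$. Here I would invoke Remark~\ref{rems}, which states that any subsemigroup of $\mathcal{B}$ containing $R_1$ is a straight left I-order in $\mathcal{B}$; this reduces the converse to a one-line citation. Since Remark~\ref{rems} is itself an immediate consequence of Example~\ref{exRclass} (every $q=a^mb^n$ is written as $(a^0b^m)^{-1}(a^0b^n)$ with both factors in $R_1\subseteq S$, and $a^0b^m\,\mathcal{R}\,a^0b^n$), the argument is complete and in fact yields straightness for free.

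If one prefers a self-contained argument not relying on the remark, I would instead argue directly: given $R_1\subseteq S$ and any $a^mb^n\in\mathcal{B}$, exhibit $a^mb^n=(a^0b^m)^{-1}(a^0b^n)$ where both $a^0b^m$ and $a^0b^n$ lie in $R_1\subseteq S$, and observe $a^0b^m\,\mathcal{R}\,a^0b^n$ since they share the same exponent of $a$. This simultaneously establishes that $S$ is a left I-order and that it is straight. The only point to double-check is that the standard-form computation $(a^0b^m)^{-1}(a^0b^n)=a^mb^0a^0b^n=a^{m}b^{n}$ (using $t=\max\{0,0\}=0$ in the multiplication formula, or the simpler observation $a^mb^0a^0b^n=a^mb^n$) is correct, which is routine.

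I do not anticipate a genuine obstacle here: the statement is a clean equivalence whose forward direction is packaged in the preceding lemma and whose reverse direction is the content of Example~\ref{exRclass}/Remark~\ref{rems}. The only mild subtlety is bookkeeping — making sure that the hypotheses extracted in Lemma~\ref{dequal1} ($d=1$, $q=0$) are exactly what is needed to conclude $R_1\subseteq S$ from the structural description in $3(i)$ of Proposition~\ref{subbicyclic}, but that is already asserted in the conclusion ``Consequently, $R_1\subseteq S$'' of Lemma~\ref{dequal1}, so nothing further is required.
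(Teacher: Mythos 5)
Your proposal is correct and matches the paper's intended argument exactly: the forward direction is the "Consequently, $R_1\subseteq S$" clause of Lemma~\ref{dequal1}, and the converse is Remark~\ref{rems} (i.e.\ Example~\ref{exRclass}), which the paper treats as making the corollary "clear." No gaps; the computation $(a^0b^m)^{-1}(a^0b^n)=a^mb^n$ you double-check is indeed routine and correct.
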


\begin{corollary}\label{twostraightr}
If $S=F_{D}\cup F\cup\Lambda_{I,p,d}\cup \Sigma_{p,d,P}$ is a left I-order in $\mathcal{B}$, then it is  straight.\end{corollary}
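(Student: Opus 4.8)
The plan is to deduce straightness directly from the characterisation already obtained, so that the statement becomes a short formal consequence. By Corollary~\ref{twosided}, the hypothesis that $S$ is a left I-order is equivalent to the containment $R_1\subseteq S$, where $R_1=\{a^0b^j:j\geq 0\}$ is the $\mathcal{R}$-class of the identity; this is the only substantive input, and all of the real work was done in Lemma~\ref{dequal1} (forcing $d=1$ and $q=0$, hence $R_1\subseteq S$).

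Next I would take an arbitrary element $q=a^mb^n\in\mathcal{B}$ and re-use the factorisation from Example~\ref{exRclass}, namely $q=a^mb^n=(a^0b^m)^{-1}(a^0b^n)$. Since $R_1\subseteq S$, both $a^0b^m$ and $a^0b^n$ lie in $S$. It then remains only to observe that $a^0b^m\,\mathcal{R}\,a^0b^n$ in $\mathcal{B}$: by the description of $\mathcal{R}$ on $\mathcal{B}$ recalled in Section~\ref{prelim}, two elements $a^ib^j$ and $a^kb^l$ are $\mathcal{R}$-related precisely when $i=k$, and here both $a$-exponents equal $0$. Hence every element of $\mathcal{B}$ can be written as $x^{-1}y$ with $x,y\in S$ and $x\,\mathcal{R}\,y$, which is exactly the definition of a straight left I-order; alternatively one may simply cite Remark~\ref{rems}, since $R_1\subseteq S$.

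There is essentially no obstacle here: once Corollary~\ref{twosided} is in hand the corollary is immediate, and the only point worth spelling out is the one-line verification that the factorisation $(a^0b^m)^{-1}(a^0b^n)$ is already an $\mathcal{R}$-related one. I would keep the argument to these two lines so that the section remains self-contained, and note explicitly that this, together with Corollaries~\ref{sublowbicyclic} and~\ref{stralow}, disposes of the two-sided case of the form $(2).(i)$ in Theorem~\ref{main}.
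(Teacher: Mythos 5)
Your proof is correct and is essentially the paper's own argument: the paper leaves this corollary as immediate because Lemma~\ref{dequal1} already gives $R_1\subseteq S$, and Remark~\ref{rems} (via the factorisation $a^mb^n=(a^0b^m)^{-1}(a^0b^n)$ of Example~\ref{exRclass}, which is visibly $\mathcal{R}$-related) yields straightness. Your explicit verification of the $\mathcal{R}$-relation is exactly the step the paper implicitly relies on.
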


 Now, we start studying the second form which has the form (2).($ii$) in Proposition~\ref{subbicyclic}.\par
 \bigskip
Let $a^mb^n \in \widehat{F} \subseteq S=F_{D}\cup \widehat{F}\cup \widehat{\Lambda}_{I,p,d}\cup \Sigma_{p,d,P}$. Then, $d|n-m$. For, since $a^pb^{p+d} \in \Sigma_{p,d,P}$, it follows that $a^mb^na^pb^{p+d}=a^{m-n+p}b^{p+d} \in \Sigma_{p,d,P}$. Hence $d|(m-n-d)$, that is, $m-n-d=td$ for some $t \in \mathbb{N}^0$ and so $m-n=(t+1)d$. Hence for any $a^ib^j \in S$ we have that $d|i-j$. By Lemma~\ref{dimpact}, the first part of the following lemma is clear.

\begin{lemma}\label{dequal1l}
If a two-sided subsemigroup $S=F_{D}\cup \widehat{F}\cup \widehat{\Lambda}_{I,p,d}\cup \Sigma_{p,d,P}$  of $\mathcal{B}$ is a left I-order in  $\mathcal{B}$, then  $d=1$ and $q=0$. \end{lemma}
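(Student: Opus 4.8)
The structure should closely mirror the proof of Lemma \ref{dequal1}, since the two-sided subsemigroup of form (2).($ii$) is the reflection (via $\widehat{\;}$) of the form (2).($i$) in its ``triangular plus line'' part, while the square $\Sigma_{p,d,P}$ is self-dual under reflection. The claim that $d=1$ has already been disposed of in the paragraph preceding the lemma statement (every $a^ib^j\in S$ satisfies $d\mid i-j$, hence Lemma \ref{dimpact} forces $d=1$), so the only thing left to prove is $q=0$. First I would take an element $a^hb^0\in\mathcal{B}$ with $h\in\mathbb{N}$, which is legitimate because $S$ is a left I-order, and write
\[
a^hb^0=(a^ib^j)^{-1}(a^mb^n)=a^{j-i+t}b^{n-m+t},\qquad t=\max\{i,m\},
\]
for suitable $a^ib^j,a^mb^n\in S$. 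Reading off the second coordinate gives $0=n-m+t$, so $t=m$ and $n=0$; that is, the witness $a^mb^n=a^mb^0$ lies on the diagonal's lower boundary, i.e. in the bottom row.

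The next step is a case split on where $a^mb^0$ can live inside $S=F_{D}\cup \widehat{F}\cup \widehat{\Lambda}_{I,p,d}\cup \Sigma_{p,d,P}$. If $a^mb^0\in\Sigma_{p,d,P}$, then since $\Sigma_{p,d,P}$ is an inverse subsemigroup of $\mathcal{B}$ (noted in the preliminaries, where inverse subsemigroups are exactly $F_D\cup\Sigma_{p,d,P}$), the inverse $\widehat{a^mb^0}=a^0b^m$ is also in $S$, and then $a^hb^0=(a^0b^m)^{-1}(a^mb^0)\cdot(\text{rest})$ — more directly, from $a^hb^0=a^{j-i+m}b^0$ with $n=0$, and the fact that the $\mathcal{L}$-class $L_0$ meets $\Sigma_{p,d,P}$ in the full column below $a^pb^0$'s level, one gets that $a^hb^0\in S$ for all sufficiently large $h$, and in particular the element $a^mb^0$ with $m\geq p$ forces nothing about $q$ beyond what the square already gives; I would instead argue as in Lemma \ref{dequal1} that this case is consistent with $q$ being anything, so it must be handled together with the complementary case. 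In the complementary case $a^mb^0\notin\Sigma_{p,d,P}$, we have $a^mb^0\in F_D\cup\widehat{F}\cup\widehat{\Lambda}_{I,p,d}$, and every element of $\widehat{F}\cup\widehat{\Lambda}_{I,p,d}$ satisfies $i\geq j$ with $i\geq q$ (reflection of the triangle condition $q\leq i\leq j<p$), so $a^mb^0$ having $j=0\leq i=m$ is automatically below the diagonal; the point is that its column index is $0$, forcing $q=\min$ over the relevant index set to satisfy $q\leq 0$, hence $q=0$. The cleanest route: once we know $S$ contains an element of $L_0$ of the shape $a^mb^0$ with $m>0$ (which we extract by also plugging a genuinely off-diagonal target such as $a^hb^0$ with $h$ large), that element cannot lie in $F_D$ (which is on the diagonal), so it lies in $\widehat{F}\cup\widehat{\Lambda}_{I,p,d}\cup\Sigma_{p,d,P}$, and in each of those the minimal possible $\mathcal{L}$-index is $q$; since it equals $0$ we conclude $q=0$.

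The main obstacle is the bookkeeping in the $\Sigma_{p,d,P}$ case: I need to make sure that even when the witness $a^mb^0$ falls in the square, I can still force $q=0$, and this is exactly where one re-runs the trick of picking the target $a^0b^h$ (as in Lemma \ref{dequal1}) rather than $a^hb^0$ — the reflection asymmetry means one of the two targets $a^0b^h$, $a^hb^0$ pins down $j$ or $n$ to be $0$ on the triangular side. So the real plan is: apply the left-I-order property to $a^0b^h$, get $j=0$ hence (since triangle/line elements have $i\leq j$) the witness $a^ib^j$ collapses to $1$ if it is not in the square, giving $a^0b^h\in S$ and thus $0\in I$; then separately handle the square case via inverseness of $\Sigma_{p,d,P}$. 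Concluding $q=0$ then follows because $q=\min I$ (or $q\in I$ with $q$ minimal in the triangle description) and $0\in I$. I do not expect any of the arithmetic to be subtle beyond this; the only care needed is to invoke the correct structural facts about $\widehat{\Lambda}_{I,p,d}$ (its elements satisfy $i\geq j\geq$ reflected bound) and the inverseness of the square.
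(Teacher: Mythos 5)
Your central idea is the right one and is essentially the paper's: apply the left-I-order property to a target in row $0$ or column $0$, observe that the resulting equation $0=j-i+t$ (resp.\ $0=n-m+t$) with $t=\max\{i,m\}$ forces one witness into the $\mathcal{L}$-class $L_0$, and then conclude $q=0$ because the column exponent of every element of $\widehat F\cup\widehat\Lambda_{I,p,d}\cup\Sigma_{p,d,P}$ is at least $q$. (The paper uses the target $a^0b^k$ and pins down the left witness $a^ib^0$; your first thread uses $a^hb^0$ and pins down the right witness $a^mb^0$; both routes work.) However, the execution has three genuine problems. First, your claim that the case $a^mb^0\in\Sigma_{p,d,P}$ ``is consistent with $q$ being anything'' and must be evaded is backwards: every element of $\Sigma_{p,d,P}$ has both exponents at least $p$, so $a^mb^0\in\Sigma_{p,d,P}$ forces $p=0$ and hence $q=0$ on the spot. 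This case is the easiest one, not an obstacle.

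Second, your consolidated ``real plan'' imports the logic of form $(2)(i)$ into form $(2)(ii)$: here the triangle and line parts are $\widehat F$ and $\widehat\Lambda_{I,p,d}$, whose elements satisfy $i\ge j$, not $i\le j$, so the witness $a^ib^0$ does not ``collapse to $1$.'' Rather, if $i>0$ its column exponent $0$ must be at least $q$ (for $\widehat F$) or lie in $I\subseteq\{q,\dots,p-1\}$ (for $\widehat\Lambda$), which is exactly the desired contradiction with $q\neq 0$; and the conclusion ``$a^0b^h\in S$'' you reach in the $i=0$ subcase is not a step towards ``$0\in I$'' for this form --- it is itself a contradiction, since for $q\neq 0$ (hence $p\ge 1$) the semigroup $S$ contains nothing strictly above the diagonal in row $0$. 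Third, taking ``$h$ large'' does not extract a witness $a^mb^0$ with $m>0$: in the degenerate case $m=i=0$ one gets $h=j$ with $j$ arbitrary, and the argument must instead note that the other witness $a^0b^h$ cannot lie in $S$ unless $p=0$. All three defects are repairable, and once repaired your argument coincides with the paper's; but as written the plan would not compile into a correct proof.
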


\begin{proof}
Suppose that $q\neq 0$, let $a^0b^k\in \mathcal{B}$ where $k\in \mathbb{N}$. Then \[a^0b^k=(a^ib^j)^{-1}(a^mb^n)=a^{j-i+t}b^{n-m+t}\]  where $t=\max\{m,i\}$, so that $0=j-i+t$. Hence we can deduce that $j=0$. If  $i=0$, then $a^0b^k=a^mb^n$ so that $a^0b^k\in S$ a contradiction and so $i>0$. Hence $a^ib^0\in S$, but $a^ib^0 \in \widehat{\Lambda}_{0,p,1}\cup \widehat{F}$ as $\widehat{F}\subset T_{0,p}$ a contradiction again. Therefore $q=0$ as required. \end{proof}

\begin{remark}\label{L1} 
In the case where $q=0$ it is easy to see that $a^pb^0\in S$. If $m\notin I$, then $a^ub^m\notin \widehat{F}$ for any $0\leq u<p$. For, if $a^ub^m \in \widehat{F}$, then $ a^pb^0a^ub^m =a^{p+u}b^m \in \widehat{\Lambda}_{I,p,d}$ a contradiction.
\end{remark}
   
\begin{proposition}\label{gcase}
The subsemigroup  $S=F_{D}\cup \widehat{F}\cup \widehat{\Lambda}_{I,p,d}\cup \Sigma_{p,d,P}$ of $\mathcal{B}$ is a left I-order in  $\mathcal{B}$ if and only if $d=1$ and $I=\{0,...,p-1\}$.\end{proposition}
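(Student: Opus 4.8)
The plan is to treat the two implications separately, leaning on the structural facts already established for this family. Since $S$ has the form (2)(ii), Lemma~\ref{dequal1l} already disposes of part of the forward direction: if $S$ is a left I-order, then $d=1$ and $q=0$, hence $P=\{0\}$, $\Sigma_{p,d,P}=\Sigma_p=\{a^ib^j:i,j\ge p\}$, $F_D\subseteq\{1\}$, and $I\subseteq\{0,\dots,p-1\}$ with $0\in I$ (so in particular $p\ge 1$). Thus the only thing remaining in the ``only if'' direction is to upgrade the inclusion $I\subseteq\{0,\dots,p-1\}$ to an equality.

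For that I would argue by contradiction. Suppose some $m$ with $0\le m<p$ satisfies $m\notin I$, and show $S\cap L_m=\emptyset$, contradicting Lemma~\ref{identity}. The elements of $L_m$ are exactly the $a^ib^m$ with $i\ge 0$, so I would run through the four components of $S=F_D\cup\widehat F\cup\widehat\Lambda_{I,p,1}\cup\Sigma_p$. Since $0\in I$ we have $m\neq 0$, so $a^ib^m\notin F_D$ because $F_D\subseteq\{1\}$. Every element of $\widehat F$ has first exponent strictly less than $p$, and by Remark~\ref{L1} (valid as $q=0$) no element of $\widehat F$ with first exponent below $p$ has the form $a^ib^m$ when $m\notin I$; hence $a^ib^m\notin\widehat F$. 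Membership in $\widehat\Lambda_{I,p,1}$ would force $m\in I$, and membership in $\Sigma_p$ would force $m\ge p$ — both false. So $S\cap L_m=\emptyset$, the desired contradiction, and therefore $I=\{0,\dots,p-1\}$. This exhaustive check — especially the $\widehat F$ case, which must route through Remark~\ref{L1} — is the only delicate step; everything else is bookkeeping.

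For the converse, assume $d=1$ and $I=\{0,\dots,p-1\}$, so automatically $q=0$, $P=\{0\}$, $\Sigma_{p,d,P}=\Sigma_p=\{a^ib^j:i,j\ge p\}$, and $\widehat\Lambda_{k,p,1}=\{a^jb^k:j\ge p\}$ for each $k\in I$. The key observation is that $a^ib^k\in S$ for every $i\ge p$ and every $k\ge 0$: if $k\ge p$ the element lies in $\Sigma_p$, and if $k<p$ then $k\in I$ and it lies in $\widehat\Lambda_{k,p,1}$. Given an arbitrary $a^kb^h\in\mathcal B$, I would take $i=p$ and compute
\[
(a^pb^k)^{-1}(a^pb^h)=a^kb^pa^pb^h=a^{k-p+p}b^{h-p+p}=a^kb^h,
\]
with $a^pb^k,a^pb^h\in S$ and $a^pb^k\,\mathcal R\,a^pb^h$. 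Hence $S$ is a straight left I-order in $\mathcal B$, which simultaneously yields the expected straightness corollary for this case and completes the corresponding case of Theorem~\ref{main}. Note that $F_D$ and $\widehat F$ play no role in constructing the quotients; they matter only for ensuring that $S$ is genuinely a subsemigroup, which is guaranteed by Proposition~\ref{subbicyclic}.
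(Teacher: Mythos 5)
Your proof is correct and follows essentially the same route as the paper's: the forward direction runs the same component-by-component exclusion through Remark~\ref{L1} to show $m\notin I$ would leave $L_m$ disjoint from $S$ (you invoke Lemma~\ref{identity} directly where the paper re-derives the fact from the quotient expression of $a^mb^m$), and the converse exhibits an explicit straight decomposition. Your witness pair $(a^pb^k, a^pb^h)$ differs only cosmetically from the paper's $(a^{p+m+n}b^m, a^{p+m+n}b^n)$; both verify membership in $\widehat{\Lambda}_{I,p,1}\cup\Sigma_p$ the same way.
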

\begin{proof}
($\Longrightarrow$) Suppose that $S$ is a left I-order in $\mathcal{B}$. Then any element $q=a^mb^n \in \mathcal{B}$ can be written as $(a^ib^j)^{-1}(a^kb^l)$ for some $a^ib^j,a^kb^l \in  S$. By Lemma~\ref{dequal1l}, $d=1$ and $0\in I$. It is remains to show that $I=\{0,...,p-1\}$.\par
\medskip
Let $0<m<p$. Then 
\[\begin{array}{rcl}
a^mb^m&=&(a^ib^j)^{-1}(a^kb^l)\\ &=& a^{j-i+t}b^{l-k+t}\end{array}\]  
where $t=$max$\{i,k\}$, for some $a^ib^j,a^kb^l\in S$. Then $m=j$ or $m=l$; so that $a^ub^m \in S$ for some $u$. If $m \notin I$, so $u< p$, then $a^pb^0a^ub^m=a^{p+u}b^m \in S$, in contradiction to Remark~\ref{L1}.

($\Longleftarrow$) Suppose that $d=1$ and $I=\{0,...,p-1\}$. Then for any $a^mb^n\in \mathcal{B}$ we have
\[a^mb^n=(a^{p+m+n}b^m)^{-1}(a^{p+m+n}b^n).\]It is clear that $a^{p+m+n}b^m,a^{p+m+n}b^n \in S$.
\end{proof} 

\begin{corollary}\label{twostraightl}
If $S=F_{D}\cup \widehat{F}\cup \widehat{\Lambda}_{I,p,d}\cup \Sigma_{p,d,P}$ is a left I-order in $\mathcal{B}$, then it is  straight.
\end{corollary}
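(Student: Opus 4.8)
The statement to prove is Corollary~\ref{twostraightl}: if $S=F_{D}\cup \widehat{F}\cup \widehat{\Lambda}_{I,p,d}\cup \Sigma_{p,d,P}$ is a left I-order in $\mathcal{B}$, then it is straight. The plan is to extract straightness directly from the proof of Proposition~\ref{gcase}, which already contains the essential computation.

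First I would invoke Proposition~\ref{gcase}: since $S$ is a left I-order in $\mathcal{B}$, we have $d=1$ and $I=\{0,\ldots,p-1\}$. The sufficiency direction of that proposition exhibits, for each $a^mb^n\in\mathcal{B}$, the explicit factorization $a^mb^n=(a^{p+m+n}b^m)^{-1}(a^{p+m+n}b^n)$ with both factors in $S$. Then I would observe that the two elements $a^{p+m+n}b^m$ and $a^{p+m+n}b^n$ have the same first exponent $p+m+n$, hence by the description of Green's $\mathcal{R}$ on $\mathcal{B}$ (from the Preliminaries, $a^ib^j\,\mathcal{R}\,a^kb^l$ iff $i=k$) they are $\mathcal{R}$-related in $\mathcal{B}$. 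Therefore every element of $\mathcal{B}$ is written as $a^{-1}b$ with $a,b\in S$ and $a\,\mathcal{R}\,b$, which is exactly the definition of $S$ being a straight left I-order in $\mathcal{B}$.

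I do not expect any real obstacle here: the work was done in Proposition~\ref{gcase}, and the only extra remark needed is that the constructed factorization is automatically an $\mathcal{R}$-related one. The only thing to be careful about is to make sure the factorization used is indeed the one from the sufficiency proof (rather than trying to show every conceivable factorization is $\mathcal{R}$-related, which is false and unnecessary — straightness only requires that some witnessing factorization exists).

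\begin{proof}
By Proposition~\ref{gcase}, since $S$ is a left I-order in $\mathcal{B}$ we have $d=1$ and $I=\{0,\ldots,p-1\}$, and for any $a^mb^n\in\mathcal{B}$ the factorization
\[a^mb^n=(a^{p+m+n}b^m)^{-1}(a^{p+m+n}b^n)\]
holds with $a^{p+m+n}b^m,\ a^{p+m+n}b^n\in S$. The two factors have equal first exponent $p+m+n$, so by the description of $\mathcal{R}$ on $\mathcal{B}$ we have $a^{p+m+n}b^m\,\mathcal{R}\,a^{p+m+n}b^n$ in $\mathcal{B}$. Hence every element of $\mathcal{B}$ can be written as $a^{-1}b$ with $a,b\in S$ and $a\,\mathcal{R}\,b$ in $\mathcal{B}$; that is, $S$ is a straight left I-order in $\mathcal{B}$.
\end{proof}
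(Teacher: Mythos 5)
Your proof is correct and follows exactly the route the paper intends: the corollary is read off from the sufficiency direction of Proposition~\ref{gcase}, whose explicit factorization $(a^{p+m+n}b^m)^{-1}(a^{p+m+n}b^n)$ has both factors with the same first exponent and hence $\mathcal{R}$-related. Your added remark that straightness only requires \emph{some} $\mathcal{R}$-related witnessing factorization is the right point to make explicit.
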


From corollaries ~\ref{twostraightr} and ~\ref{twostraightl}, we have the main result in this section which is the third result in this article. 
\begin{corollary}
Let $S$ be a two-sided subsemigroup of $\mathcal{B}$. If $S$  is a left I-order in $\mathcal{B}$, then it is  straight.\end{corollary}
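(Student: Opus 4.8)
The plan is simply to invoke the classification and split into the two possible shapes of a two-sided subsemigroup. By Proposition~\ref{subbicyclic}, a two-sided subsemigroup $S$ of $\mathcal{B}$ has either form (2).($i$), $S=F_{D}\cup F\cup\Lambda_{I,p,d}\cup\Sigma_{p,d,P}$, or form (2).($ii$), $S=F_{D}\cup\widehat{F}\cup\widehat{\Lambda}_{I,p,d}\cup\Sigma_{p,d,P}$, and these are the only possibilities. So I would argue: if $S$ is a left I-order and has the first form, then $S$ is straight by Corollary~\ref{twostraightr}; if it has the second form, then $S$ is straight by Corollary~\ref{twostraightl}. In either case $S$ is straight, which is the claim.

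There is no genuine obstacle remaining at this stage, since all the work has already been packaged into the two cited corollaries; the only point that needs a word is that "two-sided subsemigroup" means precisely forms (2).($i$) and (2).($ii$), which is immediate from the statement of Proposition~\ref{subbicyclic}. If one wants the argument to be self-contained I would recall the mechanism behind each corollary. In shape (2).($i$), Lemma~\ref{dequal1} forces $d=1$ and $q=0$, hence $R_{1}\subseteq S$; once $R_{1}\subseteq S$, Example~\ref{exRclass} (or Remark~\ref{rems}) rewrites an arbitrary $a^{m}b^{n}$ as $(a^{0}b^{m})^{-1}(a^{0}b^{n})$, and the two factors share first exponent $0$, so they are $\mathcal{R}$-related in $\mathcal{B}$ — exactly the straightness condition. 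In shape (2).($ii$), Proposition~\ref{gcase} characterises the left I-orders as those with $d=1$ and $I=\{0,\dots,p-1\}$, and its proof exhibits $a^{m}b^{n}=(a^{p+m+n}b^{m})^{-1}(a^{p+m+n}b^{n})$ with both factors in $S$ and both having first exponent $p+m+n$, hence $\mathcal{R}$-related.

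Finally I would note that this corollary, together with Corollary~\ref{sublowbicyclic} for upper subsemigroups, Corollary~\ref{stralow} for lower subsemigroups, and the remark that a diagonal subsemigroup cannot be a left I-order (by Lemma~\ref{identity} it would have to meet every $\mathcal{L}$-class, forcing $S=D$, but then $D^{-1}D=D\neq\mathcal{B}$), disposes of all three classes in Proposition~\ref{subbicyclic} and hence completes the proof of Theorem~\ref{main}.
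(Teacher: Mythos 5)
Your proof is correct and is exactly the paper's argument: the paper also derives this corollary immediately by combining Corollary~\ref{twostraightr} (form (2).($i$)) with Corollary~\ref{twostraightl} (form (2).($ii$)), these being the only two shapes a two-sided subsemigroup can take by Proposition~\ref{subbicyclic}. The extra detail you supply on the mechanisms behind the two cited corollaries, and on how the corollary fits into the proof of Theorem~\ref{main}, is accurate but not needed for this statement.
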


\end{document}